\newtheorem{theorem}{Theorem}
\newtheorem{conjecture}[theorem]{Conjecture}
\newtheorem{proposition}[theorem]{Proposition}
\newcommand{\ihat}{\hat\imath}
\newcommand{\jhat}{\hat\jmath}
\newcommand{\khat}{\hat{k}}
\newcommand{\R}{\mathbb{R}}
\newcommand{\nequiv}{\mathrel{\not\equiv}}
\newcommand{\Mod}{\textnormal{mod}}
\newcommand{\ParseWords}{\textnormal{ParseWords}}
\newcommand{\LeftCombTree}{\textnormal{Left}\-\textnormal{Comb}\-\textnormal{Tree}}
\newcommand{\RightCombTree}{\textnormal{Right}\-\textnormal{Comb}\-\textnormal{Tree}}
\newcommand{\LeftCrookedTree}{\textnormal{Left}\-\textnormal{Crooked}\-\textnormal{Tree}}
\newcommand{\RightCrookedTree}{\textnormal{Right}\-\textnormal{Crooked}\-\textnormal{Tree}}
\newcommand{\LeftTurnTree}{\textnormal{Left}\-\textnormal{Turn}\-\textnormal{Tree}}
\newcommand{\RightTurnTree}{\textnormal{Right}\-\textnormal{Turn}\-\textnormal{Tree}}
\newcommand{\vcentergraphics}[1]{\ensuremath{\vcenter{\hbox{\includegraphics{#1}}}}}
\newcommand{\smalltree}[2]{\ensuremath{\vcenter{\hbox{\scalebox{.7}{\includegraphics{binarytree#1-#2}}}}}}
\begin{document}

\title{Toward a language theoretic proof \\ of the four color theorem}

\author{Bobbe Cooper}
\address{
	School of Mathematics \\
	206 Church St.\ S.E. \\
	Minneapolis, MN 55455, USA
}
\author{Eric Rowland}
\address{
	School of Computer Science \\
	University of Waterloo \\
	Waterloo, ON N2L 3G1, Canada
}
\author{Doron Zeilberger}
\address{
	Department of Mathematics \\
	Rutgers University \\
	Piscataway, NJ 08854, USA
}

\date{November 22, 2011}

\thanks{The second author was supported in part by NSF grant DMS-0239996; the third author was supported in part by NSF grant DMS-0901226.}

\begin{abstract}
This paper considers the problem of showing that every pair of binary trees with the same number of leaves parses a common word under a certain simple grammar.  We enumerate the common parse words for several infinite families of tree pairs and discuss several ways to reduce the problem of finding a parse word for a pair of trees to that for a smaller pair.  The statement that every pair of trees has a common parse word is equivalent to the statement that every planar graph is four-colorable, so the results are a step toward a language theoretic proof of the four color theorem.
\end{abstract}

\maketitle
\markboth{Cooper, Rowland, and Zeilberger}{Toward a language theoretic proof of the four color theorem}

\section{Introduction}\label{introduction}

Let $G$ be the context-free grammar with start symbols $0, 1, 2$ and formation rules $0 \to 12$, $0 \to 21$, $1 \to 02$, $1 \to 20$, $2 \to 01$, $2 \to 10$.  An $n$-leaf tree $T$ \emph{parses} a length-$n$ word $w$ on $\{0, 1, 2\}$ if $T$ is a valid derivation tree for $w$ under the grammar $G$; that is, there is a labeling of the vertices of $T$ compatible with the formation rules such that the leaves of $T$, from left to right, are labeled with the letters of $w$.  For example, the tree
\[
	\vcentergraphics{binarytree7-7}
\]
parses the word $0110212$, as can be seen by this labeling:
\[
	\vcentergraphics{binarytree7-7labeled}
\]
Note that labeling the leaves of a tree uniquely determines a labeling of the internal vertices under $G$ if a valid labeling exists.

Certainly $G$ is \emph{ambiguous} --- there exist distinct trees that parse the same word; for example, the trees
\[
	\vcentergraphics{binarytree3-2} \qquad \vcentergraphics{binarytree3-1}
\]
both parse $010$.  To take a somewhat larger example, the trees
\[
	\vcentergraphics{binarytree7-7} \qquad \vcentergraphics{binarytree7-64}
\]
both parse the word $0110212$:
\[
	\vcentergraphics{binarytree7-7labeled} \qquad \vcentergraphics{binarytree7-64labeled}
\]

However, something much stronger can be said about this grammar.

\begin{theorem}\label{main}
The grammar $G$ is totally ambiguous.
\end{theorem}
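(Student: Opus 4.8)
The plan is to deduce Theorem~\ref{main} from the four color theorem via Tait's reformulation of it. The first step is to recast the grammar algebraically. Identifying the letters $0,1,2$ with the three nonzero elements of $\mathbb{Z}_2\times\mathbb{Z}_2$, the six rules of $G$ collapse to the single rule that an internal vertex is labeled by the sum of the labels of its two children, because two nonzero elements of $\mathbb{Z}_2\times\mathbb{Z}_2$ sum to a nonzero element precisely when they are distinct. Thus a leaf-labeling of an $n$-leaf tree extends to a valid $G$-derivation exactly when propagating this sum-rule upward never produces the label $0$; in particular the root then carries the total sum $\sum_i w_i\ne 0$.

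Next I would attach to any two $n$-leaf plane trees $T_1,T_2$ a cubic plane multigraph $H(T_1,T_2)$: place $T_1$ above a horizontal line and $T_2$ below it with their leaves meeting on the line in the prescribed left-to-right order, identify leaf $i$ of $T_1$ with leaf $i$ of $T_2$ and suppress the resulting degree-two vertex so that the two leaf-edges fuse into a single edge $f_i$, and finally add one edge joining the two roots (routed around the left end to keep the drawing planar). Every vertex of $H(T_1,T_2)$ then has degree $3$, the graph is loopless and planar, and --- a short lemma to check --- it is bridgeless whenever $n\ge 2$: deleting any edge leaves the piece it cuts off still joined to the opposite tree through one of the edges $f_i$. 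The cases $n\le 1$ of the theorem are immediate.

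The crux is the following correspondence, which I would prove via the evident dictionary between vertex labels and edge colors: $T_1$ and $T_2$ parse a common word if and only if $H(T_1,T_2)$ admits a proper $3$-edge-coloring by the nonzero elements of $\mathbb{Z}_2\times\mathbb{Z}_2$. From valid labelings of $T_1$ and $T_2$ for a common word $w$, color every tree-edge by the label of its lower endpoint and the root-edge by $\sum_i w_i$ (nonzero, being the common root label); at each vertex the three colors present are two child-labels and their sum, pairwise distinct because no label is $0$, so the coloring is proper, and the edges $f_i$ spell $w$. Conversely, from a proper $3$-edge-coloring, give each non-root vertex of each tree the color of the edge to its parent and each root the color of the root-edge; properness forces this assignment to obey the sum-rule with no label equal to $0$, so it is a valid $G$-labeling of both trees, and both read off the word whose $i$th letter is the color of $f_i$.

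Now Tait's theorem says the four color theorem is equivalent to the statement that every bridgeless cubic planar multigraph has a proper $3$-edge-coloring. Applying it to each $H(T_1,T_2)$, I conclude that every pair of $n$-leaf trees parses a common word, i.e.\ $G$ is totally ambiguous. I do not expect a genuine mathematical obstacle here: the care is all in the bookkeeping of the reduction --- suppressing the glued leaves correctly, including the extra root-edge (without which the roots could have degree two and the forward map would break), and checking bridgelessness --- since those are precisely the places where a sloppy construction yields a graph outside the scope of Tait's theorem. The real difficulty, and the reason for the rest of the paper, lies in the opposite direction: proving directly, \emph{without} invoking the four color theorem, that every pair of trees parses a common word, which is exactly the statement the subsequent sections chip away at by enumeration and by reductions to smaller tree pairs.
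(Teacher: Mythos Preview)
Your argument is correct: the Klein four-group recasting, the construction of the cubic planar multigraph $H(T_1,T_2)$ with the extra root-edge, the bijection between common parse words and proper $3$-edge-colorings, and the appeal to Tait's theorem all go through as you describe. The bridgelessness check and the treatment of the suppressed leaf vertices are exactly the bookkeeping points that need attention, and you have handled them.

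The paper takes a different route to the same destination. Rather than build $H(T_1,T_2)$ and invoke Tait directly, it shows in Section~\ref{equivalence} that Theorem~\ref{main} is equivalent to Kauffman's cross-product formulation (that for every pair of $n$-bracketings there is a tuple in $\{\ihat,\jhat,\khat\}^n$ making both bracketings nonzero), and then cites Kauffman for the equivalence of that statement with the four color theorem. The machinery is the quaternion group $Q$ and the quotient $Q/\{\pm1\}\cong V$, which in the end recovers precisely your $\mathbb{Z}_2\times\mathbb{Z}_2$ picture; along the way the paper extracts the root-label invariant (Proposition~\ref{root label}) and its vector analogue (Proposition~\ref{root vector}), which let it drop Kauffman's requirement that the two bracketings evaluate to the \emph{same} vector. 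Your approach is more self-contained and bypasses both the cross product and the quaternions; the paper's approach has the advantage of connecting explicitly to Kauffman's existing literature and of isolating the invariants (Propositions~\ref{root label} and~\ref{root vector}) that are reused later when decomposing tree pairs in Section~\ref{reducing}.
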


That is, every pair of derivation trees with the same number of leaves has at least one word that they both parse.  Kauffman~\cite{Kauffman} proved this theorem (in a slightly different form, as we describe below) by showing that it is equivalent to the four color theorem --- the statement that every planar graph is four-colorable.  The four color theorem was proved by Appel, Haken, and Koch~\cite{Appel--Haken, Appel--Haken--Koch} using substantial computing resources.  The hope of the present authors is that a direct proof of Theorem~\ref{main} will be shorter than the known proofs of the four color theorem, thereby providing a shorter proof of the four color theorem.

In this paper we describe first results in this direction.
First we show that Theorem~\ref{main} is equivalent to Kauffman's formulation.
Section~\ref{parameterized families} determines explicit common parse words for several simple parameterized families of tree pairs.
In Section~\ref{general families} we establish existence of parse words for more general families.
In Section~\ref{turn trees} we enumerate the common parse words of a $3$-parameter family of tree pairs.
We conclude in Section~\ref{reducing} by discussing in more generality methods of reducing the problem of finding a common parse word for a pair of trees.

A \emph{Mathematica} package~\cite{ParseWords} and a Maple package~\cite{LOU} that accompany this paper and facilitate the discovery of the results we present can be downloaded from the respective web sites of the second and third authors.

\section{Relationship to the cross product}\label{equivalence}

The set of possible derivation trees under $G$ is the set of \emph{binary trees} --- trees in which each vertex has either $0$ or $2$ children.  (All trees in the paper are rooted and ordered.)

Let $|w|$ be the length of the word $w$, and let $|w|_i$ be the number of occurrences of the letter $i$ in $w$.

\begin{proposition}\label{root label}
Let $w$ be a word of length $n$ on $\{0, 1, 2\}$ and $T$ an $n$-leaf binary tree that parses $w$.  Then for some permutation $(r,s,t)$ of $(0,1,2)$,
\[
	|w|_s \equiv |w|_t \nequiv |w|_r \equiv |w| \mod 2.
\]
Moreover, the root of $T$ receives the label $r$ when parsing $w$.
\end{proposition}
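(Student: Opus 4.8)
The plan is to encode the three letters as the nonzero vectors of $(\mathbb{Z}/2\mathbb{Z})^2$ and to exploit the fact that, under the rules of $G$, the label of a vertex is completely determined as the sum of the labels of its two children. First I would fix the identification of $\{0,1,2\}$ with the set $V$ of nonzero elements of $(\mathbb{Z}/2\mathbb{Z})^2$, say $0 \mapsto (0,1)$, $1 \mapsto (1,0)$, $2 \mapsto (1,1)$. The three elements of $V$ sum to $0$, so any three distinct elements $a, b, c$ of $V$ satisfy $a = b + c$. Every formation rule of $G$ has the shape $a \to bc$ with $\{a,b,c\} = \{0,1,2\}$; hence in the labeling of $T$ that parses $w$, the label of each internal vertex is the $(\mathbb{Z}/2\mathbb{Z})^2$-sum of the labels of its two children.

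Next I would run a straightforward induction on subtree size to conclude that the label of every vertex of $T$ equals the sum of the leaf labels in its subtree. Applying this at the root gives
\[
	r = \sum_{i \in \{0,1,2\}} |w|_i\, i = \sum_{i \in \{0,1,2\}} (|w|_i \bmod 2)\, i
\]
in $(\mathbb{Z}/2\mathbb{Z})^2$, so the root label $r$ depends only on the parities $a_i := |w|_i \bmod 2$. Since $r \ne 0$ (it is a letter, hence a nonzero vector), the parities $a_0, a_1, a_2$ cannot all be equal: if all were $0$ the sum is $0$, and if all were $1$ the sum is again $0$ because the three elements of $V$ sum to $0$. Therefore exactly one parity differs from the other two; letting $r$ be that index and $(r,s,t)$ the resulting permutation of $(0,1,2)$, we get $|w|_s \equiv |w|_t \nequiv |w|_r \pmod 2$.

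It then remains to pin down the congruence with $|w|$ and to verify that the vector $r$ really is the named letter. For the first, $|w| \equiv a_0 + a_1 + a_2 \equiv a_r + 2 a_s \equiv a_r \equiv |w|_r \pmod 2$, using $a_s = a_t$. For the second, use $s + t = r$ in $(\mathbb{Z}/2\mathbb{Z})^2$ to rewrite the root label as $a_r\, r + a_s\,(s + t) = (a_r + a_s)\, r = r$, since $a_r \ne a_s$ forces $a_r + a_s = 1$. I do not expect any step to present a real obstacle: the one place that requires genuine care is choosing the encoding so that "three distinct letters" collapses to the identity $a = b + c$, and then the parity bookkeeping in this last paragraph, which is exactly where the symmetric relation $|w|_s \equiv |w|_t \nequiv |w|_r$ is supplemented by the asymmetric one $|w|_r \equiv |w|$.
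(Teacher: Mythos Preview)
Your argument is correct. The encoding of the alphabet as the nonzero elements of $(\mathbb{Z}/2\mathbb{Z})^2$ makes each formation rule $a\to bc$ into the identity $a=b+c$, and the induction showing that every vertex label equals the sum of the leaf labels below it is routine; the parity bookkeeping at the end is accurate.

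The paper's own proof is different and considerably shorter: it is a one-line induction on the derivation. For words of length~$1$ the congruence is checked directly, and then one observes that applying any rule $a\to bc$ (with $\{a,b,c\}=\{0,1,2\}$) changes each of $|w|_0$, $|w|_1$, $|w|_2$, and $|w|$ by exactly~$1$, so all four parities flip simultaneously and the chain of congruences $|w|_s\equiv|w|_t\nequiv|w|_r\equiv|w|$ is preserved, as is the identity of the root label~$r$. What you gain with your approach is an explicit algebraic formula for the root label, $r=\sum_i |w|_i\,i$ in $(\mathbb{Z}/2\mathbb{Z})^2$, which makes the ``moreover'' clause fall out cleanly rather than being carried along inductively; in fact your encoding is exactly the Klein four-group $V=Q/\{1,-1\}$ that the paper introduces a page later when relating the grammar to Kauffman's cross-product formulation, so your proof anticipates that structure. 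What the paper's proof buys is brevity: no encoding needs to be set up, and the invariance is immediate once one sees that a rule application flips all four quantities' parities.
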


\begin{proof}
The congruence holds for the three words of length $1$, and the derivation rules of $G$ preserve it because all four terms change parity with each rule application.
\end{proof}

It follows that if the parities of $|w|_0$, $|w|_1$, and $|w|_2$ are equal then no tree parses $w$ under $G$.  If on the other hand the parity of $|w|_r$ differs from the other two, then $r$ is an invariant of $w$ in the sense that any tree parsing $w$ has its root labeled $r$.

Kauffman~\cite{Kauffman} formulated Theorem~\ref{main} not in terms of a grammar but in terms of the cross product on the standard unit vectors $\ihat, \jhat, \khat$ in $\R^3$.
The cross product satisfies
\begin{align*}
	\ihat \times \jhat &= (-\ihat) \times (-\jhat) = (-\jhat) \times \ihat = \jhat \times (-\ihat) = \khat, \\
	\jhat \times \khat &= (-\jhat) \times (-\khat) = (-\khat) \times \jhat = \khat \times (-\jhat) = \ihat, \\
	\khat \times \ihat &= (-\khat) \times (-\ihat) = (-\ihat) \times \khat = \ihat \times (-\khat) = \jhat.
\end{align*}
Further, for every vector $v \in \R^3$ we have
\begin{align*}
	v \times v &= v \times (-v) = 0, \\
	v \times 0 &= 0 \times v = 0.
\end{align*}
The cross product on $\R^3$ is not associative, so in general the expression $v_1 \times v_2 \times \cdots \times v_n$ is ambiguous; to evaluate it for a given tuple $(v_1, v_2, \dots, v_n)$ we must choose an order in which to compute the $n-1$ cross products.
Let us call such an order an \emph{$n$-bracketing}.
Kauffman showed that the four color theorem is equivalent to the statement that for every pair of $n$-bracketings there exists an $n$-tuple $(v_1, v_2, \dots, v_n) \in \{\ihat, \jhat, \khat\}^n$ such that the two bracketings of $v_1 \times v_2 \times \cdots \times v_n$ evaluate to the same nonzero vector.

We now develop tools to show that Theorem~\ref{main} is equivalent to Kauffman's statement.
Roughly speaking, we show that we can replace $\pm \ihat \to 0$, $\pm \jhat \to 1$, and $\pm \khat \to 2$.
It is easy to see that $n$-bracketings are in bijection with $n$-leaf binary trees.
Therefore, given an $n$-bracketing of $v_1 \times v_2 \times \cdots \times v_n$, one may label each internal vertex of the corresponding binary tree with the cross product of the labels of its children (in order).
The condition that the bracketing does not evaluate to $0$ is equivalent to the condition that the evaluation does not encounter the product $v \times v$ or $v \times (-v)$, hence our formation rules for the grammar $G$.

Each $(v_1, v_2, \dots, v_n) \in \{\ihat, \jhat, \khat\}^n$ possesses an invariant analogous to that of Proposition~\ref{root label}.
To see what this invariant is, consider the quaternion group, whose elements are $Q = \{1, i, j, k, -1, -i, -j, -k\}$ and whose binary operation $\cdot$ satisfies
\begin{align*}
	i \cdot j &= (-i) \cdot (-j) = (-j) \cdot i = j \cdot (-i) = k, \\
	j \cdot k &= (-j) \cdot (-k) = (-k) \cdot j = k \cdot (-j) = i, \\
	k \cdot i &= (-k) \cdot (-i) = (-i) \cdot k = i \cdot (-k) = j,
\end{align*}
as well as identities such as $(-1) \cdot (-1) = 1$ and $(-1) \cdot i = -i$ suggested by the notation.
Further, for $q \in \{i, j, k, -i, -j, -k\}$ we have $q \cdot q = -1$ and $q \cdot (-q) = 1$.
Consider $\phi : \{\ihat, \jhat, \khat, -\ihat, -\jhat, -\khat\} \to \{i, j, k, -i, -j, -k\}$ mapping
\begin{align*}
	\phi(\ihat) &= i, \qquad \phi(-\ihat) = -i, \\
	\phi(\jhat) &= j, \qquad \phi(-\jhat) = -j, \\
	\phi(\khat) &= k, \qquad \phi(-\khat) = -k.
\end{align*}
The map $\phi$ is a ``partial homomorphism'' in the sense that $\phi(v_1 \times v_2) = \phi(v_1) \cdot \phi(v_2)$ if $v_1 \neq v_2$ and $v_1 \neq -v_2$.
This property allows us to establish the following invariant.

\begin{proposition}\label{root vector}
Let $(v_1, v_2, \dots, v_n) \in \{\ihat, \jhat, \khat\}^n$, and choose a bracketing of $v_1 \times v_2 \times \cdots \times v_n$ that does not evaluate to the zero vector.
Then this bracketing evaluates to $\phi^{-1}(\phi(v_1) \cdot \phi(v_2) \cdots \phi(v_n))$.
\end{proposition}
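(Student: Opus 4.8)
The plan is to prove, by induction on $n$, a statement slightly stronger than the one asserted: if a bracketing of $v_1 \times v_2 \times \cdots \times v_n$ with each $v_\ell \in \{\ihat, \jhat, \khat\}$ does not evaluate to the zero vector, then the quaternion product $\phi(v_1) \cdot \phi(v_2) \cdots \phi(v_n)$ --- unambiguous because quaternion multiplication is associative --- lies in $\{i, j, k, -i, -j, -k\}$, and the bracketing evaluates to the image of this product under $\phi^{-1}$. Including the clause that the quaternion product avoids $\pm 1$ is not a mere convenience: it is exactly what guarantees that $\phi^{-1}$ may legitimately be applied, which the statement of the proposition already presupposes, and it is the one place where a naive induction would break down.

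The base case $n = 1$ is immediate, since the only bracketing evaluates to $v_1 = \phi^{-1}(\phi(v_1))$ and $\phi(v_1) \in \{i, j, k\}$. For the inductive step, take a nonzero bracketing of $v_1 \times v_2 \times \cdots \times v_n$ with $n \geq 2$; its outermost cross product combines a bracketing of $v_1 \times \cdots \times v_m$ with a bracketing of $v_{m+1} \times \cdots \times v_n$ for some $1 \leq m < n$. Because $0 \times x = x \times 0 = 0$, neither sub-bracketing can evaluate to the zero vector, so the induction hypothesis applies to both. Writing $p_L = \phi(v_1) \cdots \phi(v_m)$ and $p_R = \phi(v_{m+1}) \cdots \phi(v_n)$, this yields $p_L, p_R \in \{i, j, k, -i, -j, -k\}$, and the two sub-bracketings evaluate to $a := \phi^{-1}(p_L)$ and $b := \phi^{-1}(p_R)$, each a vector in $\{\pm\ihat, \pm\jhat, \pm\khat\}$.

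It then remains to evaluate $a \times b$ and identify it with $\phi^{-1}$ of the full quaternion product. Since $\phi$ restricts to a bijection between $\{\pm\ihat, \pm\jhat, \pm\khat\}$ and $\{i, j, k, -i, -j, -k\}$ that commutes with negation, the assumption $a \times b \neq 0$ --- equivalently $a \neq \pm b$ --- translates into $p_L \neq \pm p_R$. The partial homomorphism property of $\phi$ now applies to give $\phi(a \times b) = \phi(a) \cdot \phi(b) = p_L \cdot p_R$, which by associativity equals $\phi(v_1) \cdot \phi(v_2) \cdots \phi(v_n)$. As $a \times b$ is a nonzero vector lying along a coordinate axis, this product is in $\{i, j, k, -i, -j, -k\}$ and $a \times b = \phi^{-1}(\phi(a \times b))$, which closes the induction. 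The argument is essentially mechanical; I expect the only genuine care to lie in the bookkeeping that keeps $\phi^{-1}$ well-defined at every node --- precisely the reason for strengthening the inductive hypothesis up front.
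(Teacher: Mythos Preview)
Your proof is correct and takes essentially the same approach as the paper's: both use the partial homomorphism property of $\phi$ together with associativity in $Q$ to transfer the bracketed cross-product computation into the quaternion group, while tracking that no intermediate value leaves $\{\pm i,\pm j,\pm k\}$ so that $\phi^{-1}$ stays defined. The paper phrases this as ``emulating'' the evaluation in $Q$ rather than as an explicit induction, but your inductive argument is precisely the formalization of that emulation.
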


\begin{proof}
Since the bracketing of $v_1 \times v_2 \times \cdots \times v_n$ does not evaluate to $0$, each of the $n-1$ cross products is an operation on two linearly independent vectors.
Therefore we may emulate the evaluation of the bracketing in $Q$ rather than in $\R^3$, because replacing $\times$ with $\cdot$ is consistent with $\phi$.
Since $\cdot$ is associative, the bracketing evaluates to $\phi(v_1) \cdot \phi(v_2) \cdots \phi(v_n)$ in $Q$.
Moreover, since we do not encounter $0$ in $\R^3$, we do not encounter $-1$ or $1$ in $Q$; in particular, $\phi(v_1) \cdot \phi(v_2) \cdots \phi(v_n) \in \{i, j, k, -i, -j, -k\}$, and therefore $\phi^{-1}(\phi(v_1) \cdot \phi(v_2) \cdots \phi(v_n))$ exists.
\end{proof}

A result of Proposition~\ref{root vector} is that we can drop the condition in Kauffman's statement that the two bracketings evaluate to the same vector.
Therefore the four color theorem is equivalent to the statement that for every pair of $n$-bracketings there exists an $n$-tuple $(v_1, v_2, \dots, v_n) \in \{\ihat, \jhat, \khat\}^n$ such that the two bracketings of $v_1 \times v_2 \times \cdots \times v_n$ evaluate to nonzero vectors.

Consider the homomorphism $\sigma : Q \to Q/\{1, -1\} \cong V$, where $V = \{e, 0, 1, 2\}$ is the Klein four-group, $e = \{1, -1\}$ is the identity element, $0 = \{i, -i\}$, $1 = \{j, -j\}$, and $2 = \{k, -k\}$.
Let $\tau : \{\ihat, \jhat, \khat\} \to \{0, 1, 2\}$ be defined by $\tau(v) = \sigma(\phi(v))$.
In other words, $\tau$ removes the hat and forgets the sign.
Since $\sigma$ is a homomorphism, evaluating $T_1$ and $T_2$ at $(v_1, v_2, \dots, v_n) \in \{\ihat, \jhat, \khat\}^n$ results in nonzero vectors if and only if $\tau(v_1) \tau(v_2) \cdots \tau(v_n)$ is a parse word for $T_1$ and $T_2$.
Therefore, for $n$-leaf binary trees $T_1$ and $T_2$, the $n$-tuples $(v_1, v_2, \dots, v_n) \in \{\ihat, \jhat, \khat\}^n$ that evaluate to nonzero vectors when bracketed by $T_1$ and $T_2$ are in bijection with words $w \in \{0, 1, 2\}^n$ that are parsed by both $T_1$ and $T_2$.
It follows that Theorem~\ref{main} is equivalent to the four color theorem.

\section{Parameterized families}\label{parameterized families}

In this section we introduce several families of binary trees and enumerate the parse words of several pairs of these trees.
First we establish some additional terminology.

If $T$ parses a word $w$ on $\{0, 1, 2\}$, then $T$ also parses all words obtained from $w$ by permuting the letters in the alphabet.  Let $\ParseWords(T_1, T_2)$ be the set of equivalence classes (under permutations) of words parsed by both trees $T_1$ and $T_2$.  We abuse notation slightly by writing a representative of each equivalence class.  For example, it turns out that for the pair of $7$-leaf trees mentioned in Section~\ref{introduction} there is only one equivalence class of parse words, so for those trees we write
\[
	\ParseWords(T_1, T_2) = \{0110212\}.
\]
Often we will take this representative to be the word in the equivalence class which is lexicographically first --- words of the form $0$ or $0^k 1 v$.
However, we will depart from this convention when convenient.
The four color theorem is equivalent to the statement that for every pair of $n$-leaf binary trees $T_1$ and $T_2$ we have $\ParseWords(T_1,T_2) \neq \{\}$.

The \emph{level} of a vertex is its distance from the root.  That is, the root lies on level $0$, the root's children lie on level $1$, and so on.

A \emph{path tree} is a binary tree with at most two vertices in each level.  The $5$-leaf path trees are as follows.
\[
	\includegraphics{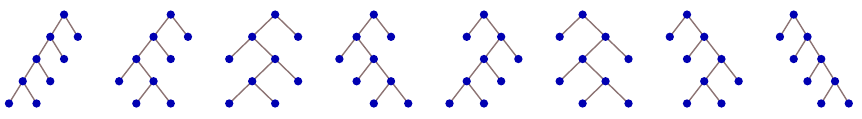}
\]
The two leaves on level $n-1$ in an $n$-leaf path tree are called the \emph{bottom leaves}.

The set of $n$-leaf path trees is in trivial bijection to the set $\{l, r\}^{n-2}$ of $(n-2)$-length words on $\{l, r\}$:  Since each level has at most two vertices, at most one vertex in each level has children, so we may form a word that records which child --- left or right --- has children at each level.  We shall use this bijection to define several families of trees.

Because of their linear structure, path trees are simpler to work with than binary trees in general, so the emphasis of this paper is on path trees.  Indeed, several infinite families of pairs of path trees can be shown to satisfy Theorem~\ref{main} directly and have only a few parse words.  We take up this task now.  Some of the proofs work by finding out where the local conditions imposed by the two trees force a unique labeling and then just working out the consequences, so in some cases it may be quicker to prove the theorem for yourself than to read the proof provided.

Let $\LeftCombTree(n)$ be the $n$-leaf path tree corresponding to the word $l^{n-2}$.  The left comb trees for $n \geq 2$ are pictured below.
\[
	\includegraphics{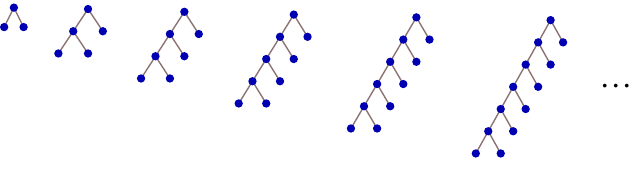}
\]
Let $\RightCombTree(n)$ be the $n$-leaf path tree corresponding to $r^{n-2}$; $\RightCombTree(n)$ is the left--right reflection of $\LeftCombTree(n)$.  We warm up with some \emph{comb}i\-na\-tor\-ics.

\begin{theorem}\label{comb-comb}
$\ParseWords(\LeftCombTree(n), \RightCombTree(n)) =$
\[
	\begin{cases}
		\left\{01^{n-2}2\right\}	& \text{if $n \geq 2$ is even} \\
		\left\{01^{n-2}0\right\}	& \text{if $n \geq 3$ is odd.}
	\end{cases}
\]
\end{theorem}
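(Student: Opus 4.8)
The plan is to recast the grammar in group-theoretic terms, reduce ``being parsed by a comb tree'' to a condition on partial sums of the word, and then count directly. First I would use the Klein four-group $V=\{e,0,1,2\}$ from Section~\ref{equivalence}, writing it additively with $e$ the identity and $0,1,2$ its three non-identity elements: a derivation rule $r\to st$ is valid exactly when $\{r,s,t\}$ is the set of non-identity elements of $V$, equivalently when $s$ and $t$ are distinct non-identity elements and $r=s+t$. Consequently a binary tree $T$ parses a word $w$ if and only if, after labeling the leaves of $T$ by the letters of $w$ and propagating bottom-up by the rule ``the label of a vertex is the sum of the labels of its two children,'' no internal vertex ever receives $e$; and in that case the internal labeling is the unique valid one.

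Next I would specialize to comb trees. Writing $w=w_1\cdots w_n$ and reading the leaves of $\RightCombTree(n)$ from left to right along its spine, a short telescoping computation shows that the internal vertex on level $k$ is labeled $w_{k+1}+w_{k+2}+\cdots+w_n$. Hence $\RightCombTree(n)$ parses $w$ if and only if every suffix of $w$ of length between $2$ and $n$ has nonzero sum in $V$; by the left--right reflection that relates the two comb trees, $\LeftCombTree(n)$ parses $w$ if and only if every prefix of $w$ of length between $2$ and $n$ has nonzero sum. So $w$ is a common parse word exactly when no prefix and no suffix of length $\ge 2$ sums to $e$.

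I would then introduce the partial sums $S_0=e$ and $S_j=w_1+\cdots+w_j$, which lie in $V$, satisfy $S_{j-1}\ne S_j$ for all $j$ (since $w_j\ne e$), and from which $w$ is recovered via $w_j=S_j-S_{j-1}$. The prefix condition becomes $S_j\ne e$ for $2\le j\le n$, and the suffix condition becomes $S_j\ne S_n$ for $0\le j\le n-2$; together with the automatic facts $S_1\ne e$ and $S_{n-1}\ne S_n$, these say that $S_1,\dots,S_n$ all lie in $\{0,1,2\}$ and that $S_1,\dots,S_{n-1}$ all avoid the value $S_n$. Being a sequence with consecutive entries distinct inside the two-element set $\{0,1,2\}\setminus\{S_n\}$, the block $S_1,\dots,S_{n-1}$ must alternate between those two values; conversely any such configuration satisfies all the constraints. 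There are thus $3\cdot 2=6$ common parse words, one for each choice of the pair $(S_n,S_1)$, and unwinding $w_j=S_j-S_{j-1}$ gives $w_2=\cdots=w_{n-1}=S_n$ while $\{w_1,w_n\}$ is pinned down by the parity of $n$; this yields, after relabeling the alphabet, $w=0\,1^{n-2}\,2$ when $n$ is even and $w=0\,1^{n-2}\,0$ when $n$ is odd, and the six words form a single equivalence class under permutations of $\{0,1,2\}$, which is the stated answer.

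Essentially nothing here is difficult; the steps that need care are matching the positions of $w$ to the leaves of the comb tree and carrying out the telescoping so that the indices and the range of prefix/suffix lengths come out right, checking the small cases $n=2$ and $n=3$ against the formula, and verifying at the end that the six solution words really do coincide up to a permutation of the alphabet --- in particular, when $n$ is odd, that the repeated middle letter is distinct from the common endpoint letter. Everything else is forced.
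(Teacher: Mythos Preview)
Your argument is correct and takes a genuinely different route from the paper's. The paper proves the theorem by a direct forcing argument: it labels the first two leaves $0$ and $1$, then walks through the leaves $3,4,\dots,n$ showing that at each step one tree forbids one letter and the other tree forbids another, so the remaining letter is forced; the parity of the internal labels along the spines then determines the last leaf. It is a short, hands-on construction with no algebra.

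Your approach instead passes through the Klein four-group interpretation of Section~\ref{equivalence} and rephrases ``parsed by a comb tree'' as a condition on prefix and suffix sums, reducing the problem to counting sequences $S_0,\dots,S_n$ in $V$ with $S_0=e$, consecutive values distinct, no later $S_j$ equal to $e$, and no earlier $S_j$ equal to $S_n$. This is a cleaner structural characterization: once one sees that $S_1,\dots,S_{n-1}$ must alternate inside a two-element set, existence and uniqueness fall out simultaneously, and the middle block $w_2=\cdots=w_{n-1}=S_n$ is immediate from $a+b=S_n$ in $V$. The cost is the algebraic setup; the benefit is a reusable lemma (the prefix/suffix-sum description of comb parse words) that could be applied to other pairs involving a comb tree, and an argument that does not depend on tracking two spines in parallel. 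The small checks you flag --- indexing the comb leaves, the $n=2,3$ base cases, and confirming that the six words form a single orbit under $S_3$ --- are routine and go through as you describe.
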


\begin{proof}
We build a common parse word from left to right --- up $\LeftCombTree(n)$ and down $\RightCombTree(n)$.  At every leaf, each tree will eliminate one possible label, so the parse word will turn out to exist and be unique.

The case $n=2$ can be established by testing all words of length $2$, so let $n \geq 3$.  Without loss of generality we may label the first two leaves $0$ and $1$.  It follows from this that the root of $\RightCombTree(n)$ receives the label $1$, the non-leaf (internal) vertex on the second level of $\RightCombTree(n)$ receives $2$, and therefore the internal vertex on the third level of $\RightCombTree(n)$ receives $0$.  This implies (from the right comb) that the third leaf cannot receive $0$.  However, from the left comb we find that the third leaf cannot receive $2$.  Therefore the third leaf receives $1$.  For the fourth leaf, the right comb precludes $2$ and the left comb precludes $0$, so the fourth leaf receives $1$.  Likewise all the way down the word through leaf $n-1$.  The internal vertex labels in each tree alternate between $0$ and $2$, except for the root which receives 1.  If $n$ is odd then the lowest internal vertex in the right comb receives $2$, so that the last leaf receives $0$; if $n$ is even then this internal vertex receives $0$, and the last leaf receives $2$.
\end{proof}

Note from the proof of this theorem that the internal labels corresponding to a common parse word of $\LeftCombTree(n)$ and $\RightCombTree(n)$ will match (top to bottom) if $n$ is odd, and will differ by the permutation which swaps $0$ and $2$ if $n$ is even.

Let $\LeftTurnTree(m,n)$ be the $(m+n)$-leaf path tree corresponding to $l^m r^{n-2}$, and let $\RightTurnTree(m,n)$ be the tree corresponding to $r^m l^{n-2}$.  Each of these trees is formed by ``gluing'' together two comb trees.  For example,
\[
	\LeftTurnTree(2,3) = \vcentergraphics{binarytree5-13}.
\]
The following theorem is a special case of the general treatment of two turn trees given in Section~\ref{turn trees}.

\begin{theorem}\label{turn-turn}
For $m \geq 1$,
\begin{multline*}
	\ParseWords(\LeftTurnTree(m, n), \RightTurnTree(1, m + n - 1)) = \\
	\begin{cases}
		\left\{0 0 1^{n-3} 2 0^m, 0 2 1^{n-3} 0 0^m\right\}	& \text{if $n \geq 3$ is odd} \\
		\left\{0 2 1^{n-3} 2 0^m, 0 0 1^{n-3} 0 0^m\right\}	& \text{if $n \geq 4$ is even.}
	\end{cases}
\end{multline*}
\end{theorem}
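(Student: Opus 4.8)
The plan is to recast parsing as a condition in the Klein four-group $V=\{e,0,1,2\}$ of Section~\ref{equivalence} and then to observe that the two trees impose so many overlapping constraints on a candidate word that, up to permuting the alphabet, only two survive.

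First I would record a reformulation valid for any binary tree. Each rule $c\to ab$ of $G$ asserts exactly that $c=ab$ in $V$ with $a\ne b$, so a leaf word $w=w_1\cdots w_{m+n}\in\{0,1,2\}^{m+n}$ forces the internal labels of a binary tree $T$ --- they are obtained bottom up by multiplying in $V$ --- and $T$ parses $w$ if and only if for every internal vertex the product in $V$ of the leaf-letters below it is not $e$. Set $P_0=e$ and $P_i=w_1\cdots w_i\in V$; then $(P_i)$ has distinct consecutive terms, recovers $w$ via $w_i=P_{i-1}P_i$, and its orbits under permuting $0,1,2$ are precisely the $\ParseWords$ classes. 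So it suffices to classify the sequences $(P_i)$ allowed by both trees.

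Next I would work along the spine of each tree and record the constraint at each internal vertex. In $\LeftTurnTree(m,n)$, leaves $1,\dots,n-2$ hang off the $r$-part of the spine, leaves $n-1,n$ are the bottom pair, and leaves $n+1,\dots,m+n$ hang off the $l$-part; taking in $V$ the product of the leaf-letters below each spine vertex, the internal labels come out to be $P_kP_n$ for $0\le k\le n-2$ together with $P_{n+1},\dots,P_{m+n}$, so this tree parses $w$ exactly when
\[
	P_k\ne P_n \ \ (0\le k\le n-2), \qquad P_i\ne e \ \ (n+1\le i\le m+n).
\]
The tree $\RightTurnTree(1,m+n-1)$, a leaf attached at a new root above $\LeftCombTree(m+n-1)$, has leaf $1$ split off at the root, leaves $2,3$ the bottom pair, and leaves $4,\dots,m+n$ along the comb spine; its internal labels are $P_{m+n}$ and $P_1P_i$ for $3\le i\le m+n$, so it parses $w$ exactly when $P_1\ne P_i$ for $3\le i\le m+n$, the condition $P_{m+n}\ne e$ being subsumed because $m\ge1$. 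I expect this bookkeeping --- tracking the leaf correspondence through the ``turn'' of each spine --- to be the only place requiring care; everything afterward is forced.

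Finally I would solve. Normalize $P_1=0$ using the alphabet symmetry. Then $P_n\ne P_0=e$ and $P_n\ne P_1=0$ (using $n\ge3$) force $P_n\in\{1,2\}$; write $z$ for the other element of $\{1,2\}$. Since $P_i\ne0$ for $i\ge3$ and $P_i\ne e$ for $i\ge n+1$, the terms $P_n,\dots,P_{m+n}$ all lie in $\{1,2\}$ with distinct consecutive values, so they strictly alternate and are determined by $P_n$. The terms $P_2,\dots,P_{n-1}$ lie in $\{e,1,2\}$, avoid $P_n$ (for indices $\le n-2$ by the constraint above, and $P_{n-1}\ne P_n$ in any case), and have distinct consecutive values, so they form an alternating string in $\{e,z\}$ determined by $P_2\in\{e,z\}$. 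Hence exactly four sequences $(P_i)$ begin with $P_1=0$, and transposing $1$ and $2$ swaps the two choices of $P_n$ while preserving whether $P_2=e$, so these fall into two $\ParseWords$ classes. Computing $w_i=P_{i-1}P_i$ then gives $w_1=0=w_{n+1}=\cdots=w_{m+n}$ and $w_3=\cdots=w_{n-1}$, with the remaining letters $w_2$ and $w_n$ pinned down by the choice of $P_2$ together with the parity of $n$ (the alternating block $P_2,\dots,P_{n-1}$ of length $n-2$ terminates at $e$ or at $z$ according to that parity and choice). Relabelling so the common value of $w_3,\dots,w_{n-1}$ is $1$, the two classes are $\{\,0\,0\,1^{n-3}\,2\,0^m,\ 0\,2\,1^{n-3}\,0\,0^m\,\}$ when $n$ is odd and $\{\,0\,0\,1^{n-3}\,0\,0^m,\ 0\,2\,1^{n-3}\,2\,0^m\,\}$ when $n$ is even, as claimed; and the two words in each pair are inequivalent, since once the first letter is normalized to $0$ they already disagree in the second.
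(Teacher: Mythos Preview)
Your proof is correct and takes a genuinely different route from the paper's. The paper argues directly on the trees: it fixes the label of the \emph{last} leaf, uses Proposition~\ref{root label} to pin down the labels near the roots, deduces that the last $m$ leaves all receive $0$, and then does a three-way case split on the labels of the bottom leaves $n-1$ and $n$ of $\LeftTurnTree(m,n)$, checking each case by hand. You instead pass through the Klein four-group reformulation of Section~\ref{equivalence}, encode a candidate word by its sequence of partial products $P_i$, read off from each spine the linear constraints $P_k\ne P_n$ and $P_1\ne P_i$, and solve the resulting system of inequalities in $V$.

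The paper's argument is shorter and needs no machinery beyond the grammar rules. Your approach requires more setup but is more systematic: once the constraints are written down, the solution is a mechanical classification of alternating sequences in a two-element set, and there is no ad~hoc case analysis. The prefix-product encoding also makes transparent why the answer depends only on the parity of $n$ (it is the parity of the alternating block $P_2,\dots,P_{n-1}$) and why exactly two classes survive (the residual $1\leftrightarrow2$ symmetry identifies the two choices of $P_n$ but not the two choices of $P_2$). This technique would extend cleanly to other path-tree pairs, and indeed is close in spirit to the internal-label bookkeeping the paper later uses in Section~\ref{turn trees}. One small remark: your final relabelling step ``so that the common value of $w_3,\dots,w_{n-1}$ is $1$'' is vacuous when $n=3$, but the resulting representatives still agree with the theorem's up to the permitted permutation, so no harm is done.
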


\begin{proof}
Without loss of generality, label the last leaf of each tree $0$.  The roots of the trees receive the same label, and thus the respective parents of the last leaf of each tree must receive $1$ and $2$ in some order, and the first leaf must be labeled $0$.  This implies that the last $m$ leaves are labeled $0$.  There are (up to permutation of $1$ and $2$) three possible options for the labels of leaves $n-1$ and $n$ (the bottom leaves of $\LeftTurnTree(m, n)$), namely $12$, $10$, and $01$.  Each of the first two options can be seen to yield a unique common parse word as given in the statement of the theorem.  The third option, in which leaves $n-1$ and $n$ are labeled $01$, is not valid, since then the sibling of leaf $n+1$ in $\RightTurnTree(1, m + n - 1)$ is labeled $0$, which contradicts leaf $n+1$ receiving $0$.
\end{proof}

If $w = w_1 w_2 \cdots w_m$ is a word of length $m$ and $x$ is a rational number whose denominator (in lowest terms) divides $m$, let
\[
	w^x = w^{\lfloor x \rfloor} w_1 w_2 \cdots w_{m \cdot (x - \lfloor x \rfloor)}
\]
be the word consisting of repeated copies of $w$ truncated at $m x$ letters.  For example, $(lr)^{7/2} = lrlrlrl$.

Let $\LeftCrookedTree(n)$ be the path tree corresponding to $(lr)^{(n-2)/2}$.  The left crooked trees for $n \geq 2$ are as follows.
\[
	\includegraphics{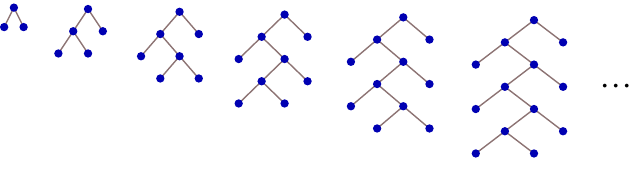}
\]
Let $\RightCrookedTree(n)$ be the path tree corresponding to $(rl)^{(n-2)/2}$ --- the left--right reflection of $\LeftCrookedTree(n)$.

The next two results determine the common parse words of a comb tree and the completely crooked trees of the same size.  Let $w^R$ be the left--right reversal of the word $w$.  Let $\Mod(n,3)$ be the smallest nonnegative integer congruent to $n$ modulo $3$.

\begin{theorem}\label{comb-crooked}
$\ParseWords(\LeftCombTree(n), \RightCrookedTree(n)) =$
\[
	\begin{cases}
		\left\{ \Mod(1-n,3) \left((012)^{n/6}\right)^R (012)^{(n-2)/6} \right\}		& \text{if $n \geq 2$ is even} \\
		\left\{ \Mod(1-n,3) \left((012)^{(n-3)/6}\right)^R (012)^{(n+1)/6} \right\}	& \text{if $n \geq 3$ is odd.}
	\end{cases}
\]
\end{theorem}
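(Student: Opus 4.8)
The plan is to reformulate parsing in terms of prefix products and then propagate the constraints that the two trees impose.  Write $a * b$ for the product in the Klein four-group $V$ of Section~\ref{equivalence}; on two distinct letters $a,b \in \{0,1,2\}$ this is simply the third letter, and $a * a = e$.  Given a word $w = w_1 \cdots w_n$, set $q_0 = e$ and $q_k = w_1 * w_2 * \cdots * w_k$, so $q_k = q_{k-1} * w_k$ and the product of a factor $w_a w_{a+1} \cdots w_b$ equals $q_{a-1} * q_b$.  A tree parses $w$ if and only if the product of the leaf labels beneath every internal vertex is nonzero (the forward direction is clear from Proposition~\ref{root vector} applied to each subtree; for the converse one labels each internal vertex by its leaf-product).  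The internal vertices of $\LeftCombTree(n)$ have leaf ranges $[1,2], [1,3], \dots, [1,n]$, so $\LeftCombTree(n)$ parses $w$ iff $q_2, \dots, q_n$ are all nonzero; since $q_1 = w_1$ is already a letter, and $q_k$ nonzero together with $w_k$ a letter forces $q_k \ne q_{k-1}$, this is equivalent to $(q_1, \dots, q_n)$ having no two consecutive entries equal.  Thus $w \mapsto (q_1, \dots, q_n)$ is a bijection from the length-$n$ parse words of $\LeftCombTree(n)$ to such ``no-stutter'' sequences in $\{0,1,2\}^n$.

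Next I would describe the internal vertices of $\RightCrookedTree(n)$.  Because a path tree has at most one vertex with children per level, the internal leaf ranges form a nested chain of intervals whose innermost member is the bottom-leaf pair $\{j, j+1\}$ with $j = \lceil n/2 \rceil$, each successive range adjoining one more position on an alternating side.  Only the root's range starts at position $1$, so, for a word $w$ that $\LeftCombTree(n)$ already parses, the further conditions $\RightCrookedTree(n)$ imposes amount to $q_{a-1} \ne q_b$ over the nonroot ranges $[a,b]$.  Reading off the zigzag, for $n$ even these pairs $(a-1, b)$ are $(j-t, j+t)$ and $(j-t, j+t+1)$ for $1 \le t \le j-1$, while for $n$ odd they are $(j-1, j+1)$ together with $(j-t, j+t-1)$ and $(j-t, j+t)$ for $2 \le t \le j-1$.

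The propagation step then runs as follows.  Permuting the alphabet, fix $q_j$ and $q_{j+1}$ to be any two distinct letters.  Each $\RightCrookedTree(n)$-condition forces some entry $q_{j-t}$ of the left half to differ from two \emph{consecutive} entries of the right half $q_j, q_{j+1}, \dots, q_n$, hence to equal their $*$-product; in particular the whole left half $q_1, \dots, q_{j-1}$ is determined by the right half.  Plugging these expressions back into the no-stutter requirement on the left half forces entries of the right half that are two apart to differ as well, so the right half is (eventually) periodic of period $3$ — every three consecutive entries a permutation of $\{0,1,2\}$ — with a small parity-dependent adjustment at the seam near position $j$ (for $n$ even one gets $q_{j+2} = q_j$).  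Consequently $(q_1, \dots, q_n)$, hence $w$, is pinned down by the choice of $q_j, q_{j+1}$ alone; one checks that the resulting sequence does satisfy every condition, and since $S_3$ acts transitively on ordered pairs of distinct letters there is exactly one permutation class of common parse words.  Finally, undoing $w_k = q_{k-1} * q_k$: the right half of $w$ turns into a run of the cyclic pattern $012$, the left half into the reversal of such a run, and the single leading letter together with the precise split between the two runs are exactly what $\Mod(1-n,3)$ and the exponents $n/6$, $(n-2)/6$ (respectively $(n-3)/6$, $(n+1)/6$ when $n$ is odd) record; checking lengths and phase against the periodic $q$ yields the formula in the statement.

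The underlying idea is short; the labor is in the bookkeeping — listing the internal ranges of $\RightCrookedTree(n)$ cleanly, pushing the two parities of $n$ through the propagation side by side, and, at the end, nailing down the phase so that the unique class comes out presented exactly as $\Mod(1-n,3)\big((012)^{\bullet}\big)^{R}(012)^{\bullet}$ rather than as an anonymous eventually-periodic word.  That last calibration, rather than the existence or uniqueness itself, is where I expect to spend the most effort.
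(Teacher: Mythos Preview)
Your argument is correct, and the approach is genuinely different from the paper's.  The paper proceeds by a one-step induction on $n$: it checks the base case $n=2$, then attaches a copy of $\smalltree{2}{1}$ at the bottom of $\RightCrookedTree(n-1)$ and inserts a matching $\smalltree{2}{1}$ into $\LeftCombTree(n-1)$, labels the two new leaves explicitly, and observes that the permutation $0\to 2,\ 1\to 0,\ 2\to 1$ restores the stated form; reversibility of the extension gives uniqueness.  In other words, the paper never solves any constraints globally --- it simply tracks how the unique word for $n-1$ grows into the unique word for $n$.

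Your route instead linearises the problem via prefix products $q_k$ in the Klein four-group, so that $\LeftCombTree(n)$ contributes exactly the ``no-stutter'' condition and $\RightCrookedTree(n)$ contributes a family of inequalities $q_{a-1}\neq q_b$ indexed by its internal intervals.  You then solve these constraints directly, deducing that the right half of $(q_k)$ is forced to be (eventually) $3$-periodic and the left half is determined from it.  This is more algebraic and more global: it separates the existence/uniqueness argument cleanly from the final calibration of the formula, and the $q$-framework would transfer with little change to other comb-versus-path-tree pairs.  The cost, as you note, is the bookkeeping at the end --- the paper's inductive extension delivers the explicit $\Mod(1-n,3)\bigl((012)^{\bullet}\bigr)^{R}(012)^{\bullet}$ form almost for free, whereas you must still choose a normalising permutation and match phases by hand.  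A small remark: your appeal to Proposition~\ref{root vector} for the ``product-of-leaves'' characterisation is slightly indirect; the cleaner justification is the homomorphism $\sigma$ to $V$ discussed immediately after that proposition.
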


\begin{proof}
One checks that for $n=2$ the set of equivalence classes of parse words is $\{20\}$.

Inductively, assume that $\LeftCombTree(n-1)$ and $\RightCrookedTree(n-1)$ parse the word claimed and that this is the only word they both parse (up to permutations of the alphabet).  For even $n-1$, the two bottom leaves of $\RightCrookedTree(n-1)$ are leaves $\frac{n-1}{2}$ and $\frac{n+1}{2}$.  For odd $n-1$, they are leaves $\frac{n}{2}$ and $\frac{n+2}{2}$.  Observe that for even $n-1$ the right bottom leaf of $\RightCrookedTree(n-1)$ receives $0$, and for odd $n-1$ the left bottom leaf of $\RightCrookedTree(n-1)$ receives $0$.  For $n-1 \geq 4$ these are respectively the first and second of the two consecutive $0$s in the parse word.

We attach $\smalltree{2}{1}$ at the bottom of $\RightCrookedTree(n-1)$ to form $\RightCrookedTree(n)$ and insert $\smalltree{2}{1}$ at the corresponding place in $\LeftCombTree(n-1)$ to form $\LeftCombTree(n)$.  Label the new bottom leaves of $\RightCrookedTree(n)$ $12$ if $n-1$ is even and $21$ if $n-1$ is odd; we can label the corresponding leaves of $\LeftCombTree(n)$ the same by labeling their respective neighboring internal vertices $0$ and $1$ if $n-1$ is even and $0$ and $2$ if $n-1$ is odd.  The permutation $0 \to 2, 1 \to 0, 2 \to 1$ puts the new word in the form given in the theorem.

This process is reversible, so every parse word for $n$ comes from a parse word for $n-1$.
\end{proof}

The next theorem follows immediately from the previous theorem by labeling $\LeftCombTree(n)$ and $\RightCrookedTree(n)$ with a common parse word and then attaching the root of each tree as the left leaf of $\smalltree{2}{1}$.

\begin{theorem}\label{comb-crooked2}
$\ParseWords(\LeftCombTree(n), \LeftCrookedTree(n)) =$
\[
	\begin{cases}
		\begin{array}{l}
			\Big\{ \Mod(2-n,3) \left((012)^{(n-1)/6}\right)^R (012)^{(n-3)/6} \Mod(2-n,3), \\
			\phantom{\Big\{} \Mod(2-n,3) \left((012)^{(n-1)/6}\right)^R (012)^{(n-3)/6} \Mod(-n,3) \Big\}
		\end{array}	& \text{if $n \geq 3$ is odd} \\
		\begin{array}{l}
			\Big\{ \Mod(2-n,3) \left((012)^{(n-4)/6}\right)^R (012)^{n/6} \Mod(2-n,3), \\
			\phantom{\Big\{} \Mod(2-n,3) \left((012)^{(n-4)/6}\right)^R (012)^{n/6} \Mod(-n,3) \Big\}
		\end{array}	& \text{if $n \geq 4$ is even.}
	\end{cases}
\]
\end{theorem}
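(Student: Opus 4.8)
The plan is to deduce the statement from Theorem~\ref{comb-crooked} by a single tree surgery, as indicated in the remark preceding this theorem (the indices there are shifted by one). First I would record an elementary fact about path trees: if $T$ is the path tree corresponding to a word $w \in \{l, r\}^{m-2}$, then replacing the left leaf of $\smalltree{2}{1}$ by $T$ --- so that the root of $T$ becomes the left child of a new root whose right child is a new leaf --- produces the $(m+1)$-leaf path tree corresponding to $l w$. Taking $T = \LeftCombTree(n-1)$, whose word is $l^{n-3}$, we get the path tree with word $l^{n-2}$, namely $\LeftCombTree(n)$; taking $T = \RightCrookedTree(n-1)$, whose word is $(rl)^{(n-3)/2}$, we get the path tree with word $l\,(rl)^{(n-3)/2}$, and a check of the definition of $w^x$ in both parities of $n$ shows $l\,(rl)^{(n-3)/2} = (lr)^{(n-2)/2}$, so this tree is $\LeftCrookedTree(n)$.

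Next I would transport parse words across this surgery. Let $u$ be a common parse word of $\LeftCombTree(n-1)$ and $\RightCrookedTree(n-1)$; by Theorem~\ref{comb-crooked} such a word exists and is unique up to permuting $\{0,1,2\}$. By Proposition~\ref{root label} the root of each of the two trees receives one and the same label $r$ when parsing $u$, namely the letter whose multiplicity in $u$ has parity differing from $|u|$. For each of the two letters $c \ne r$, labeling the new leaf $c$ and the new root $r \times c$ extends the two labelings to valid $G$-labelings of $\LeftCombTree(n)$ and $\LeftCrookedTree(n)$, so $u c$ is a common parse word of the larger pair. Conversely, since the rules of $G$ are local, deleting the last letter of any common parse word $w'$ of $\LeftCombTree(n)$ and $\LeftCrookedTree(n)$ and contracting the root leaves valid labelings of $\LeftCombTree(n-1)$ and $\RightCrookedTree(n-1)$, so $w' = u c$ with $u$ in the equivalence class of Theorem~\ref{comb-crooked} and $c \ne r$.

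It then remains to count equivalence classes and to name representatives. The word of Theorem~\ref{comb-crooked} involves at least two distinct letters, so its stabilizer among permutations of $\{0,1,2\}$ is trivial and its orbit has six elements; pairing each of these with its two admissible one-letter extensions gives twelve words, and since each $u c$ also has trivial stabilizer these split into exactly $12/6 = 2$ equivalence classes. Hence $\ParseWords(\LeftCombTree(n), \LeftCrookedTree(n))$ has two elements. For the explicit forms, take $u$ to be the canonical representative supplied by Theorem~\ref{comb-crooked} at size $n-1$; this is precisely the common prefix displayed in the statement. A short parity count via Proposition~\ref{root label} then gives $r = \Mod(1-n, 3)$, so the two letters $c \ne r$ are $\Mod(2-n, 3)$ and $\Mod(-n, 3)$, which is the claimed answer. (When $n = 3$ one has $\LeftCombTree(3) = \LeftCrookedTree(3)$, and the argument above still applies verbatim.)

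The only step requiring care is bookkeeping, not ideas: verifying $l\,(rl)^{(n-3)/2} = (lr)^{(n-2)/2}$ against the truncated-power notation, confirming that the stabilizers are trivial so that the class count is exactly two rather than fewer through an accidental collision, and carrying out the parity computation that pins down $r$, which must be split into residue cases for $n$. I expect the last of these to be the fussiest, though it is entirely elementary.
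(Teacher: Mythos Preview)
Your argument is correct and is exactly the paper's approach---the paper's proof consists of the single sentence preceding the theorem, and you have faithfully (and more carefully) carried out the surgery of grafting $\LeftCombTree(n-1)$ and $\RightCrookedTree(n-1)$ onto the left leaf of $\smalltree{2}{1}$, together with the bookkeeping that the paper omits. Your stabilizer argument for the class count is fine: a permutation of $\{0,1,2\}$ that fixes a word letterwise must fix every letter occurring in it, so two distinct letters already force triviality.
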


\begin{theorem}\label{crooked-crooked count}
For $n \geq 2$,
\[
	|\ParseWords(\LeftCrookedTree(n), \RightCrookedTree(n))| = 2^{\lfloor n/2 \rfloor - 1}.
\]
\end{theorem}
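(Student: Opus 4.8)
The plan is to prove the formula by induction on $n$, showing that the number of (permutation classes of) common parse words doubles each time $n$ increases by $2$ --- equivalently, that it doubles when $n$ passes from odd to even and is unchanged when $n$ passes from even to odd. The base cases $n=2$ and $n=3$ are checked directly: $\LeftCrookedTree(2)=\RightCrookedTree(2)$ is a single cherry, and $\{\LeftCrookedTree(3),\RightCrookedTree(3)\}=\{\LeftCombTree(3),\RightCombTree(3)\}$, so by Theorem~\ref{comb-comb} each pair has a unique common parse word up to permutation, giving count $1=2^{\lfloor n/2\rfloor-1}$.

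For the inductive step the idea is to locate, at the bottom of the zigzagging spine, a small subtree $S$ that --- thanks to the left--right reflection exchanging $\LeftCrookedTree(m)$ and $\RightCrookedTree(m)$ --- occupies the \emph{same} block of consecutive leaf positions in both trees, and to contract $S$ to a single leaf. When $m$ is even, $S$ is the bottom cherry; reflection forces it to sit at leaf positions $m/2,\,m/2+1$ in both trees, and contracting it produces $\LeftCrookedTree(m-1)$ and $\RightCrookedTree(m-1)$. When $m$ is odd the two bottom cherries no longer coincide, but the subtree spanned by the grandparent of the bottom cherry does: it is a $3$-leaf path tree occupying positions $(m-1)/2,\,(m+1)/2,\,(m+3)/2$ in both trees (appearing there as mirror images of one another), and contracting it produces $\LeftCrookedTree(m-2)$ and $\RightCrookedTree(m-2)$. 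In either case, contracting $S$ in a common parse word of the size-$m$ pair yields a common parse word of the smaller pair: the new leaf receives the label that the root of $S$ gets, which is the same in both trees because a vertex's label depends only on the subword of leaf labels it spans and not on the shape of the subtree --- this is the Klein four-group invariant of Section~\ref{equivalence} (cf.\ Proposition~\ref{root label}). This defines a map down.

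The crux is that this map is exactly $2$-to-$1$. Given a common parse word $\hat w$ of the smaller pair, one must show it has precisely two preimages, i.e.\ exactly two ways to re-expand the contracted leaf into a labeled copy of $S$ that is simultaneously valid for both size-$m$ trees. Since the contracted leaf's label $a$ forces the root of $S$ to be labeled $a$, this reduces to counting leaf-labelings of $S$ consistent with both trees' copies of $S$ and yielding root label $a$. For $m$ even this is just the number of orderings of the pair $\{0,1,2\}\setminus\{a\}$ along the cherry, namely $2$. For $m$ odd, the two copies of $S$ are mirror-image $3$-leaf combs (the $n=3$ instance of Theorem~\ref{comb-comb}), and here one uses associativity of the operation $(x,y)\mapsto\textnormal{third}(x,y)$ --- the Klein four-group operation, so that the relevant bracketings agree --- to conclude that the only labelings $xyz$ of the three leaves with root label $a$ that are valid for both combs are $xyz = bab$ with $b\in\{0,1,2\}\setminus\{a\}$, again $2$ of them. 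A short additional argument (using that a valid parse word is never constant) shows the two preimages of $\hat w$ lie in distinct permutation classes, so the $2$-to-$1$ count descends to $\ParseWords$. Multiplying the doubling across the induction gives $|\ParseWords(\LeftCrookedTree(n),\RightCrookedTree(n))|=2^{\lfloor n/2\rfloor-1}$.

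The step I expect to be the main obstacle is the odd-$m$ case: getting the leaf-position bookkeeping right (verifying that the grandparent subtree really is reflection-symmetric as a block of positions, and that contracting it returns the crooked tree two sizes down), and then running the forcing argument so that "exactly two" drops out cleanly. The even-$m$ case is comparatively routine, and the reflection-symmetry observation --- that a common subtree block exists at all --- is what makes the whole induction go through.
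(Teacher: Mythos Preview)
Your proposal is correct and follows essentially the same approach as the paper's proof: both induct by identifying, at the bottom of the zigzag spine, a small subtree occupying the same block of leaf positions in both crooked trees (a cherry when $n$ is even, a pair of mirror-image $3$-leaf combs when $n$ is odd), and both establish a $2$-to-$1$ correspondence between parse words of the size-$n$ pair and parse words of the smaller pair. The only cosmetic differences are that the paper phrases the step as extension rather than contraction, verifies the $3$-leaf forcing $aba\mapsto$ root $b$ by direct inspection rather than via the Klein four-group, and counts raw parse words rather than permutation classes (so the issue you flag about distinct classes is sidestepped); none of this changes the substance of the argument.
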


\begin{proof}
The cases $n = 2$ and $n = 3$ are easily verified.  In particular, every parse word of the $3$-leaf pair consisting of
\[
	\vcentergraphics{binarytree3-2} \qquad \vcentergraphics{binarytree3-1}
\]
is of the form $aba$ for $a \neq b$, where the two roots also get labeled $b$.

Let $n$ be odd.  Consider inductively extending $\LeftCrookedTree(n - 2)$ and $\RightCrookedTree(n - 2)$ by
\[
	\vcentergraphics{binarytree3-2} \qquad \vcentergraphics{binarytree3-1}
\]
respectively to obtain $\LeftCrookedTree(n)$ and $\RightCrookedTree(n)$.  Because the three new leaves are leaves $(n-1)/2$, $(n+1)/2$, and $(n+3)/2$ in both $\LeftCrookedTree(n)$ and $\RightCrookedTree(n)$, every parse word
\[
	w_1 w_2 \cdots w_{(n-3)/2} b w_{(n+1)/2} \cdots w_{n-3} w_{n-2}
\]
for the two $(n-2)$-leaf crooked trees can be extended to a parse word
\[
	w_1 w_2 \cdots w_{(n-3)/2} a b a w_{(n+1)/2} \cdots w_{n-3} w_{n-2}
\]
for the two $n$-leaf crooked trees.  Moreover, every parse word for the two $n$-leaf crooked trees can be obtained in this way.  Since there are two choices for $a$, there are twice as many parse words for the $n$-leaf crooked trees as for the $(n-2)$-leaf crooked trees, which establishes the statement for odd $n$; we see that $w = w_1 w_2 \cdots w_{n-1} w_n$ is a parse word for $\LeftCrookedTree(n)$ and $\RightCrookedTree(n)$ if and only if $w_i = w_{n+1-i} \neq w_{(n+1)/2}$ for $1 \leq i \leq \frac{n-1}{2}$.

Let $n$ be even.  Then every parse word of the $n$-leaf crooked trees can be obtained by extending a parse word $w_1 w_2 \cdots w_{n/2 - 1} b w_{n/2 + 1} \cdots w_{n-2} w_{n-1}$ of the $(n-1)$-leaf crooked trees.  Every parse word of $\smalltree{2}{1}$ in which the root receives label $b$ is of the form $ac$, where $a \neq b$ and $c \neq b$, so there are twice as many parse words for the $n$-leaf crooked trees as for the $(n-1)$-leaf crooked trees, which establishes the statement for even $n$.  Specifically, $w = w_1 w_2 \cdots w_{n-1} w_n$ is a parse word for $\LeftCrookedTree(n)$ and $\RightCrookedTree(n)$ if and only if $w_{n/2} \neq w_{n/2 + 1}$ and for some $b \in \{0, 1, 2\}$ we have $w_i = w_{n+1-i} \neq b$ for $1 \leq i \leq \frac{n}{2} - 1$.
\end{proof}

\section{General families}\label{general families}

Presumably explicit parse words can be found for various other parameterized families of tree pairs, but we now take a more general approach and establish results for tree pairs in which at least one of the trees does not come from a simple parameterized family.  Some of these results will be used in Section~\ref{turn trees}.  Note that, where stated, these results apply to not just path trees but binary trees in general.

\begin{proposition}\label{shared bottom leaf}
Let $n \geq 3$.  If the $i$th leaf is a bottom leaf in two $n$-leaf path trees, then the trees both parse the word $0^{k-1} 1 0^{n-k}$ for some $2 \leq k \leq n - 1$.
\end{proposition}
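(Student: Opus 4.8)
The plan is to exploit the fact that a bottom leaf in a path tree has an extremely simple local environment: it and its sibling are the two leaves on level $n-1$, the sibling is a leaf, and every internal vertex lies on the unique spine of the path tree. I would start by considering the parity invariant from Proposition~\ref{root label}. Since $n \geq 3$ and we want a word of the form $0^{k-1} 1 0^{n-k}$, which contains exactly one $1$ and no $2$, we have $|w|_0 = n-1$, $|w|_1 = 1$, $|w|_2 = 0$; so $|w|_1 \not\equiv |w|_0 \equiv |w|_2$ when $n$ is odd (giving root label $1$), and $|w|_0 \not\equiv |w|_1 \equiv |w|_2$ when $n$ is even (giving root label $0$). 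In either case the target word is parity-consistent, which is the necessary first check.

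Next I would try to parse such a word in a single path tree $T$. Walk down the spine of $T$ from the root. At each internal vertex, one child is a leaf (assigned a letter of $w$) and the other continues the spine, except at the deepest internal vertex where both children are the bottom leaves. The key observation is that a word $0^{k-1}10^{n-k}$ with all-but-one letter equal has a very rigid derivation: wherever two consecutive leaves-to-be-attached are both $0$, the grammar forbids it (we never have $0\to 00$), so in fact the single $1$ and the position of the bottom leaves must be arranged so that no internal vertex ever has two $0$-children. Concretely, if the spine reads off leaves in positions $p_1, p_2, \dots$ on one side, the letters at those positions, together with the running spine label, must avoid $v \times v$; since the letters are almost all $0$, the spine label must be forced to be $1$ or $2$ at most of these steps, which pins down where the $1$ can sit. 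I would show that for a path tree with bottom leaf in position $i$, the word $0^{k-1}10^{n-k}$ is parsed precisely when $k$ equals $i$ or $i \pm 1$ (the exact value depending on which side of the spine the bottom leaves sit and on the parity), by induction on $n$ peeling off the top spine vertex.

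Then, given two path trees sharing the bottom leaf in position $i$, each admits a word $0^{k-1}10^{n-k}$ for $k$ in a small set around $i$; the remaining step is to check these small sets always intersect. The cleanest route is probably to argue that the position of the $1$ is in fact forced to be exactly $i$ (or exactly $i+1$, uniformly) regardless of the rest of the tree — because the bottom-leaf vertex is the base case of the downward induction and its two $0$-or-$1$ children are constrained identically in both trees. If that uniform statement holds, both trees parse the \emph{same} word $0^{i-1}10^{n-i}$ (or $0^{i}10^{n-i-1}$) and we are done with $k = i$ (or $k = i+1$), which lies in $[2, n-1]$ since $i$ is a bottom-leaf position hence $2 \le i \le n-1$. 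The main obstacle I anticipate is the bookkeeping in the induction: handling the two cases for which side of the spine carries the continuing branch, keeping track of the alternating $0/2$ (or $0/1$) spine labels forced by a long run of $0$-leaves, and confirming that the single $1$ is globally forced rather than merely locally constrained. If the $1$ is \emph{not} globally forced to one position, I would fall back to a direct case analysis showing the candidate sets for the two trees—each an interval of length at most two around $i$—are never disjoint.
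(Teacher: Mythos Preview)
Your plan would eventually succeed, but it misses the one-line argument the paper gives.  The paper simply sets $k=i$ (shifting to $k=2$ or $k=n-1$ in the edge cases $i=1$, $i=n$) and observes that once the single $1$ sits at a bottom leaf, the parent of the two bottom leaves is forced to be $2$, its parent is $1$, and so on, alternating $2,1,2,1,\dots$ up the spine to the root.  The key point you do not exploit is that this internal label depends only on the \emph{level} of the vertex, not on whether the spine turns left or right there; hence both path trees receive the identical internal labeling and both parse the same word.  No induction peeling off the top vertex, no tracking of which side the spine runs on, no intersection argument between candidate sets is needed.

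Your inductive route and your ``candidate sets of size two around $i$ must intersect'' fallback are both valid --- each tree parses $0^{k-1}10^{n-k}$ exactly when $k$ is one of its two bottom-leaf positions, and both of those sets contain $i$ --- so the intersection is immediate once you state it that way.  One genuine slip: you claim a bottom-leaf position $i$ automatically satisfies $2\le i\le n-1$.  It need not; in the left comb tree the bottom leaves are leaves $1$ and $2$, and in the right comb tree they are $n-1$ and $n$.  The paper handles $i=1$ (respectively $i=n$) by noting that the sibling leaf $2$ (respectively $n-1$) is then also a bottom leaf in \emph{both} trees, and takes $k$ to be that sibling.
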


For example, this proposition applies to the pair in Theorem~\ref{crooked-crooked count} consisting of $\LeftCrookedTree(n)$ and $\RightCrookedTree(n)$.

\begin{proof}
If $i = 1$ then the second leaf is also a bottom leaf in both trees, so let $k = 2$; similarly, if $i = n$, let $k = n-1$.  If $2 \leq i \leq n-1$, let $k = i$.  Labeling the $k$th leaf $1$ and all other leaves $0$ produces a valid labeling of both trees because the internal vertices of the two trees on each level receive the same label, namely alternating between $2$ and $1$.
\end{proof}

We now give two propositions regarding extending a pair of binary trees by $\smalltree{2}{1}$.

\begin{proposition}\label{bottom-bottom}
Suppose $T_1'$ and $T_2'$ are $n$-leaf binary trees with a common parse word.  Extend each tree by attaching $\smalltree{2}{1}$ to leaf $i$, obtaining $T_1$ and $T_2$ respectively.  Then
\[
	|\ParseWords(T_1, T_2)| = 2 |\ParseWords(T_1', T_2')|.
\]
In particular, $T_1$ and $T_2$ have a common parse word.
\end{proposition}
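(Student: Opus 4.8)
The plan is to produce an explicit two-to-one correspondence between the words parsed by both $T_1$ and $T_2$ and the words parsed by both $T_1'$ and $T_2'$, and then to check that it is compatible with permutations of the alphabet, so that it descends to the equivalence classes counted by $\ParseWords$. I will take $n \geq 2$; when $n = 1$ both $T_1$ and $T_2$ equal $\smalltree{2}{1}$ and $|\ParseWords(T_1, T_2)| = 1 \neq 2 = 2\,|\ParseWords(T_1', T_2')|$, so the stated equality really does need $n \geq 2$. Write $\mathcal{W}(S_1, S_2)$ for the set of actual words, as opposed to equivalence classes, parsed by both $S_1$ and $S_2$; thus $T_1$ and $T_2$ have $n+1 \geq 3$ leaves.

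First I would set up the collapsing map $\mathcal{W}(T_1, T_2) \to \mathcal{W}(T_1', T_2')$. Suppose $w = w_1 \cdots w_{n+1}$ is parsed by $T_1$ and by $T_2$. In each $T_j$ the two new leaves, in positions $i$ and $i+1$, are the children of the vertex that was leaf $i$ of $T_j'$; a valid derivation at that vertex uses a rule $c \to w_i w_{i+1}$ of $G$, so $w_i \neq w_{i+1}$ and $c$ is the unique letter of $\{0,1,2\}$ not in $\{w_i, w_{i+1}\}$, and in particular $c$ does not depend on $j$. Deleting the two new leaves and relabeling leaf $i$ with $c$ turns the valid labeling of $T_j$ into one of $T_j'$, so $w' := w_1 \cdots w_{i-1}\, c\, w_{i+2} \cdots w_{n+1}$ belongs to $\mathcal{W}(T_1', T_2')$. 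Conversely, given $w' \in \mathcal{W}(T_1', T_2')$ with $c := w'_i$, attaching $\smalltree{2}{1}$ to leaf $i$ of each of $T_1'$ and $T_2'$ and labeling the two new leaves with the two elements of $\{0,1,2\} \setminus \{c\}$ in either order---the same order in both trees---extends the given labelings, since both $c \to ab$ and $c \to ba$ are rules of $G$. These are exactly the two preimages of $w'$ under the collapsing map, so the map is two-to-one and $|\mathcal{W}(T_1, T_2)| = 2\,|\mathcal{W}(T_1', T_2')|$.

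Next I would pass to equivalence classes. The collapsing map commutes with the action of the symmetric group $S_3$ on words, because permuting the alphabet commutes with ``delete the two new leaves and restore the missing letter'' and preserves parsability (as noted at the start of Section~\ref{parameterized families}). It therefore suffices to observe that every equivalence class of parse words of a binary tree with at least two leaves has exactly six elements. On one hand, such a tree has no constant parse word: by Proposition~\ref{root label} a word $c^m$ with $m \geq 2$ can be parsed only if $m$ is odd and the root is labeled $c$, which forces the root's two children to carry the other two letters, whereas Proposition~\ref{root label} applied to the subtree rooted at the left child---all of whose leaves equal $c$---forces that child to be labeled $c$ or forbids a valid labeling altogether, a contradiction either way. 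On the other hand, a non-constant word on $\{0,1,2\}$ has trivial $S_3$-stabilizer, since a non-identity permutation fixing every letter occurring in the word cannot exist: a $3$-cycle fixes no letter, and a transposition fixes only one. Hence each class has six elements, and
\[ |\ParseWords(T_1, T_2)| = \frac{|\mathcal{W}(T_1, T_2)|}{6} = \frac{2\,|\mathcal{W}(T_1', T_2')|}{6} = 2\,|\ParseWords(T_1', T_2')|. \]
Since $T_1'$ and $T_2'$ have a common parse word, the right-hand side is at least $2$, so in particular $T_1$ and $T_2$ have a common parse word.

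The two-to-one correspondence is routine bookkeeping. The step I expect to require the most care is the descent to equivalence classes: a priori a full orbit in $\mathcal{W}(T_1, T_2)$ could lie over a shorter orbit in $\mathcal{W}(T_1', T_2')$---which is exactly what spoils the count when $n = 1$---so one must genuinely check that all orbits in sight have the full length six, and this is where the non-constancy of parse words of trees with at least two leaves enters.
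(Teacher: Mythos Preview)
Your argument is correct and follows the same idea as the paper's proof---collapse the attached $\smalltree{2}{1}$ to obtain a two-to-one correspondence---though you are considerably more explicit: the paper simply normalizes $w_i = 0$, replaces it by $12$ or $21$, and asserts that every parse word of the pair arises uniquely this way, whereas you work first at the level of actual words and then carefully justify the passage to equivalence classes via the orbit-size argument. Your observation that the count fails when $n = 1$ (because the constant word $0$ has a short orbit) is a nice point that the paper does not mention.
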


\begin{proof}
Let $w$ be a parse word of $T_1'$ and $T_2'$.  Without loss of generality we may assume that $w_i = 0$.  Replacing $w_i$ by $12$ or $21$ produces a word that both $T_1$ and $T_2$ parse, and every parse word for the pair arises uniquely in this way.
\end{proof}

In the next proposition we consider extending a tree $T$ by inserting $\smalltree{2}{1}$ into the tree at an internal vertex to ``duplicate'' a leaf.  Fix $i$, and let $S$ be the tree hanging from the sibling vertex of leaf $i$.  Remove $S$ from its position, attach $\smalltree{2}{1}$ to the sibling of leaf $i$, and then reattach $S$ to a leaf of the new $\smalltree{2}{1}$ as follows.  If leaf $i$ is a left leaf, attach $S$ to the right leaf of the new $\smalltree{2}{1}$; if leaf $i$ is a right leaf, attach $S$ to the left leaf.  Therefore if leaf $i$ in $T$ is a left leaf, then leaves $i$ and $i+1$ in the extended tree are both left leaves, and if leaf $i$ in $T$ is a right leaf, then leaves $i$ and $i+1$ in the extended tree are right leaves.  We refer to this operation as \emph{duplicating} leaf $i$.

\begin{proposition}\label{bottom-comb}
Suppose $T_1'$ and $T_2'$ are $n$-leaf binary trees with a common parse word.  Extend $T_1'$ by attaching $\smalltree{2}{1}$ to leaf $i$, obtaining $T_1$.  Extend $T_2'$ to obtain $T_2$ by duplicating leaf $i$.  Then
\[
	|\ParseWords(T_1, T_2)| = |\ParseWords(T_1', T_2')|.
\]
In particular, $T_1$ and $T_2$ have a common parse word.
\end{proposition}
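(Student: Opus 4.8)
The plan is to set up a bijection between $\ParseWords(T_1', T_2')$ and $\ParseWords(T_1, T_2)$, showing that extending by $\smalltree{2}{1}$ on one side and by leaf-duplication on the other preserves the number of common parse words exactly. First I would fix a common parse word $w = w_1 \cdots w_n$ of $T_1'$ and $T_2'$ and, using the freedom to permute the alphabet, assume $w_i = 0$. The word $w$ comes with a labeling of the internal vertices of $T_1'$; let $a$ be the label of the sibling of leaf $i$ in $T_1'$ and $b$ the label of their common parent, so that $\{0, a\}$ determines $b$ via $G$ (and $a \neq 0$ since a valid labeling cannot apply a rule $v \times v$). The key observation is that duplicating leaf $i$ in $T_2'$ replaces leaf $i$ (labeled $0$) by a cherry $\smalltree{2}{1}$ whose two leaves must be labeled so that their cross product is $0$ — that is, by $12$ or $21$ — because the subtree $S$ hanging off the sibling is unchanged and the new internal vertex takes the role previously played by leaf $i$. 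Meanwhile attaching $\smalltree{2}{1}$ to leaf $i$ in $T_1'$ likewise forces the two new leaves to be labeled $12$ or $21$, since their parent must retain label $0$.

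Next I would verify that the two choices $12$ and $21$ are genuinely available and that they propagate consistently. On the $T_1$ side this is immediate: the extension only touches leaf $i$ and its new children, and either labeling of the pair is compatible with the unchanged label $0$ at the parent, so each of the two labelings of $T_1'$'s leaf $i$ extends to exactly two labelings of $T_1$. On the $T_2$ side I need to check that when we duplicate leaf $i$, the internal vertex that used to be leaf $i$'s position carries label $0$ in the extended tree — which it does, because its children are the new pair (labeled to cross-multiply to $0$) and its sibling is still the root of $S$, exactly as before — so the rest of the labeling of $T_2$ is forced to agree with that of $T_2'$ outside the inserted cherry, and again each labeling extends in exactly two ways. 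Thus both $T_1$ and $T_2$ parse precisely the words of the form $w_1 \cdots w_{i-1}\, xy\, w_{i+1} \cdots w_n$ with $\{x, y\} = \{1, 2\}$ (up to the global permutation bringing $w_i$ back to $0$), and the map $w \mapsto$ these two words is a two-to-one... no: it sends each of the $|\ParseWords(T_1', T_2')|$ classes to exactly one class of $\ParseWords(T_1, T_2)$, because $\{w_1 \cdots xy \cdots w_n\}$ and $\{w_1 \cdots yx \cdots w_n\}$ are permutation-equivalent iff swapping $1 \leftrightarrow 2$ fixes the rest of $w$ — and I must handle this subtlety to land exactly on the factor $1$ rather than $2$.

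That last subtlety is what I expect to be the main obstacle, and the honest way to dispatch it is to abandon equivalence classes momentarily and count actual words, paralleling the proof of Proposition~\ref{bottom-bottom}. Working with literal words: each parse word $w$ of $T_1'$, $T_2'$ with $w_i = 0$ yields the two parse words $w_1 \cdots w_{i-1}\, 1 2\, w_{i+1} \cdots w_n$ and $w_1 \cdots w_{i-1}\, 2 1\, w_{i+1} \cdots w_n$ of $T_1$, $T_2$; conversely every parse word of $T_1$, $T_2$ has letters $x \neq y$ in positions $i, i+1$ (since their parent, in either tree, must not be $0 \times 0$-type), and contracting those two letters back to a single letter $z$ with $z = x \times y$ recovers a parse word of $T_1'$, $T_2'$. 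Since permuting the alphabet of a parse word gives a parse word, the count of common parse words of $(T_1, T_2)$ with a specified letter in positions $i, i+1$ is exactly the count of common parse words of $(T_1', T_2')$ with the corresponding letter in position $i$; summing over the alphabet, $|\ParseWords(T_1, T_2)| = |\ParseWords(T_1', T_2')|$ on the level of equivalence classes, as the number of classes is the word count divided by the orbit size and the orbit structure is unchanged by the contraction. In particular, since $T_1'$ and $T_2'$ have a common parse word by hypothesis, so do $T_1$ and $T_2$.
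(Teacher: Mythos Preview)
Your argument rests on a misreading of the operation ``duplicating leaf $i$''.  You treat it as attaching a cherry $\smalltree{2}{1}$ \emph{at} leaf $i$, so that the new internal vertex sits where leaf $i$ used to be and its two children are the new leaves $i$ and $i+1$.  But that is precisely the operation already performed on $T_1'$; if the same thing were done to $T_2'$ we would be in the situation of Proposition~\ref{bottom-bottom}, and the count would double rather than stay fixed.  The actual duplication inserts the new $\smalltree{2}{1}$ at the \emph{sibling} of leaf $i$: leaf $i$ remains a leaf of $T_2$ in its old position, while the new leaf $i+1$ appears one level deeper, as a child of a new internal vertex $N$ whose other child is the root of $S$.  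In particular, leaves $i$ and $i+1$ are \emph{not} siblings in $T_2$, so there is no cherry there whose root inherits the old label $0$.

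This changes the extension count on the $T_2$ side from two to one, and that is the real source of the equality.  With $w_i = 0$ and the parent $P$ of leaf $i$ in $T_2'$ labeled (say) $1$, the root of $S$ is labeled $2$.  After duplication, $N$ must carry a label different from $2$ (one of its children) and must pair with leaf $i$ to give $P$ the label $1$; these constraints force $N = 0$, hence leaf $i$ receives $2$ and leaf $i+1$ receives $1$.  Thus exactly one of the two insertions $12$, $21$ is valid for $T_2$ (which one depends on whether leaf $i$ is a left or right leaf in $T_2'$), and since $T_1$ accepts both, the map from parse words of $(T_1',T_2')$ to parse words of $(T_1,T_2)$ is already a bijection on literal words.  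Your attempt to absorb a spurious factor of $2$ into orbit sizes could not have worked in any case: for $n \geq 2$ every parse word contains at least two distinct letters (any pair of sibling leaves forces this), so its stabilizer in $S_3$ is trivial and the orbit size is $6$ on both sides.
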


\begin{proof}
Let $w$ be a parse word of $T_1'$ and $T_2'$.  Without loss of generality we may assume that $w_i = 0$ and that the parent of leaf $i$ in $T_2'$ receives the label $1$.

If leaf $i$ is a left leaf in $T_2'$, then $T_2$ parses the word obtained by replacing $w_i$ by $21$ since duplicating leaf $i$ in $T_2'$ has the effect of the replacement
\[
	\vcentergraphics{binarytree2-1labeled0S} \quad \to \quad \vcentergraphics{binarytree3-1labeled21S}
\]
at the parent of leaf $i$, which preserves the labels of all other vertices.  If leaf $i$ is a right leaf in $T_2'$, then $T_2$ parses the word obtained by replacing $w_i$ by $12$ since now the replacement is
\[
	\vcentergraphics{binarytree2-1labeledS0} \quad \to \quad \vcentergraphics{binarytree3-2labeledS12}.
\]

Clearly $T_1$ parses both of these words, so we have found a parse word for the pair.  Moreover, every parse word of $T_1$ and $T_2$ arises uniquely in this way.
\end{proof}

In Section~\ref{parameterized families} we referred to the two leaves of maximal depth in a path tree as \emph{bottom leaves}.  In a general binary tree, a \emph{bottom leaf} is a leaf whose sibling is also a leaf.  It is clear that for $n \geq 2$ every $n$-leaf binary tree has at least one pair of bottom leaves, and every binary tree that is not a path tree has at least two pairs of bottom leaves.  We use these facts in the next two theorems.

\begin{theorem}\label{comb-general}
Let $n \geq 2$, and let $T$ be an $n$-leaf binary tree.  Let $l$ be the level of leaf $1$ in $T$.  Then $|\ParseWords(T, \LeftCombTree(n))| = 2^{l-1}$.
\end{theorem}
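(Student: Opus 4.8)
The plan is to induct on $n$, peeling off a pair of bottom leaves of $\LeftCombTree(n)$ at a time and tracking how the count $2^{l-1}$ changes with the level $l$ of leaf $1$. For $n = 2$ the only binary tree is $\LeftCombTree(2) = \smalltree{2}{1}$ itself, leaf $1$ sits on level $1$, and the three parse words $01, 12, 20$ fall into a single equivalence class, so $|\ParseWords| = 1 = 2^{0}$; this is the base case. For the inductive step, consider $\LeftCombTree(n)$ for $n \geq 3$. Its bottom leaves are leaves $n-1$ and $n$, and deleting them (replacing their parent by a leaf) yields $\LeftCombTree(n-1)$, in which leaf $1$ still lies on level $l-1$ if $l \geq 2$; the case $l = 1$ (leaf $1$ a bottom leaf of $T$) must be handled separately, and I will come back to it.

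Now I would split on whether leaves $n-1$ and $n$ of $T$ are siblings. \emph{Case 1: they are.} Then $T$ is obtained from an $(n-1)$-leaf tree $T'$ by attaching $\smalltree{2}{1}$ to leaf $n-1$, exactly as in the hypothesis of Proposition~\ref{bottom-bottom}, and simultaneously $\LeftCombTree(n)$ is obtained from $\LeftCombTree(n-1)$ by attaching $\smalltree{2}{1}$ to \emph{its} leaf $n-1$. Proposition~\ref{bottom-bottom} gives $|\ParseWords(T, \LeftCombTree(n))| = 2\,|\ParseWords(T', \LeftCombTree(n-1))|$. Since leaf $1$ of $T'$ lies on the same level $l$ as leaf $1$ of $T$, induction on $n$ (for trees where leaf $1$ is at level $l$) gives $|\ParseWords(T', \LeftCombTree(n-1))| = 2^{l-1}$, hence $|\ParseWords(T, \LeftCombTree(n))| = 2^{l}$ — but this is one power too many, so Case 1 can only occur when we are also allowed to think of $T$ as sitting over an $(n-1)$-leaf tree in which leaf $1$ is at level $l-1$; that happens precisely when leaf $1$ itself is \emph{not} among leaves $n-1,n$, i.e.\ $n \geq 4$, and then one checks the bookkeeping lines up. \emph{Case 2: leaves $n-1$ and $n$ of $T$ are not siblings.} Then leaf $n$ has a non-leaf sibling in $T$ while leaf $n-1$'s sibling is a leaf (leaf $n-1$ being a bottom leaf of $\LeftCombTree(n)$ forces, on the $\LeftCombTree$ side, that the parent of $n-1$ and $n$ is an internal vertex with a subtree hanging off). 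Here $\LeftCombTree(n)$ arises from $\LeftCombTree(n-1)$ by attaching $\smalltree{2}{1}$ to leaf $n-1$, while $T$ arises from the $(n-1)$-leaf tree $T'$ (delete leaf $n-1$) by \emph{duplicating} leaf $n-1$ in the sense of the paragraph before Proposition~\ref{bottom-comb} — leaf $n-1$ in $\LeftCombTree$ is a left leaf, so the duplication matches. Proposition~\ref{bottom-comb} then gives $|\ParseWords(T, \LeftCombTree(n))| = |\ParseWords(T', \LeftCombTree(n-1))|$, and since leaf $1$ of $T'$ is again at level $l$, induction finishes this case with the count unchanged at $2^{l-1}$.

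It remains to reconcile these two cases into a single clean induction and to handle $l = 1$. The right way to organize it, I expect, is to induct not on $n$ alone but to peel off the bottom leaves of $\LeftCombTree(n)$ and observe that whichever case occurs, leaf $1$'s level in the reduced $\LeftCombTree$ decreases by exactly $1$ \emph{relative to} the reduced $T$ iff leaf $1$ moved up, and that Proposition~\ref{bottom-bottom} versus Proposition~\ref{bottom-comb} exactly compensates: the factor-of-$2$ in Proposition~\ref{bottom-bottom} corresponds to the case where leaf $1$'s level in $T$ stays put while $\LeftCombTree$ shrinks structurally, and the factor-of-$1$ in Proposition~\ref{bottom-comb} to the case where it does not. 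For $l = 1$, leaf $1$ and leaf $2$ are siblings and bottom leaves in $T$, and also in $\LeftCombTree(n)$ only when $n = 2$; for $n \geq 3$ with $l = 1$ we instead peel leaves $1,2$ off both trees via Proposition~\ref{bottom-bottom} applied at the \emph{left} end — $T$ comes from $T'$ by attaching $\smalltree{2}{1}$ to leaf $1$ of $T'$, and $\LeftCombTree(n)$ does too — giving $|\ParseWords| = 2 \cdot |\ParseWords(T', \LeftCombTree(n-1))| = 2 \cdot 2^{0} = 2 = 2^{1-1+1}$; wait, that overshoots, so in fact for $l=1$ one should peel at the bottom as before and simply note leaf $1$'s level is the one invariant quantity. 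The genuine obstacle, and the step I would spend the most care on, is exactly this bookkeeping: verifying that every $n$-leaf tree $T$ falls into exactly one of the two structural cases above with respect to the bottom leaves of $\LeftCombTree(n)$, and that the level $l$ of leaf $1$ transforms in lockstep with the choice between Proposition~\ref{bottom-bottom} and Proposition~\ref{bottom-comb}, so that the exponent in $2^{l-1}$ is correctly maintained. The individual propositional inputs are already in hand; assembling them without an off-by-one in the exponent is the real work.
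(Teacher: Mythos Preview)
Your argument rests on a factual error: the bottom leaves of $\LeftCombTree(n)$ are leaves $1$ and $2$, not leaves $n-1$ and $n$. In $\LeftCombTree(n)$ (corresponding to the word $l^{n-2}$) the left child is the one that branches at every level, so leaf $1$ sits at depth $n-1$ with leaf $2$ as its sibling, while leaf $n$ is the root's right child at level $1$. Several downstream claims---that leaf $n-1$ is a left leaf in $\LeftCombTree$, that $\LeftCombTree(n)$ arises from $\LeftCombTree(n-1)$ by attaching $\smalltree{2}{1}$ at leaf $n-1$---inherit this mistake, and this is ultimately why your Case~1 computation lands ``one power too many'' and your $l=1$ attempt ``overshoots''.

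The deeper issue is strategic: you are peeling from the wrong tree. Even if you correct the leaf identification and remove leaves $1,2$ from $\LeftCombTree(n)$, Proposition~\ref{bottom-comb} then requires $T$ to arise from some $T'$ by \emph{duplicating} leaf $1$, i.e.\ leaf $1$'s sibling in $T$ must be an internal vertex whose left child is exactly leaf $2$. An arbitrary binary tree $T$ need not have this shape (take leaf $1$'s sibling to be a vertex whose left child is itself internal), so the case split does not exhaust all $T$. The paper's proof reverses the roles: it selects a pair of bottom leaves $i,i+1$ in $T$---these always exist---and exploits the rigidity of $\LeftCombTree$ on the other side. If $i=1$, leaves $1$ and $2$ are bottom siblings in both trees and Proposition~\ref{bottom-bottom} doubles the count, matching the drop from $l$ to $l-1$ in the reduced $T'$. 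If $i>1$, leaf $i$ is a right leaf in $\LeftCombTree(n-1)$ and $\LeftCombTree(n)$ is precisely the duplication of that leaf, so Proposition~\ref{bottom-comb} keeps the count fixed while leaf $1$ remains at level $l$ in $T'$. The bookkeeping you flagged as ``the real work'' becomes automatic once $T$ and the comb swap roles.
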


By symmetry, the analogous result holds for the right comb.

\begin{proof}
We work by induction on $n$.  The only $2$-leaf binary tree is $\LeftCombTree(2) = \smalltree{2}{1}$, which has only one parse word up to permutation of the alphabet.

Let $T$ be an $n$-leaf binary tree.  Then $T$ has a pair of bottom leaves; suppose these are leaves $i$ and $i+1$.  Remove these two leaves to obtain $T'$, which has $n-1$ leaves.  If $i = 1$, then Proposition~\ref{bottom-bottom} gives twice as many parse words for $T$ and $\LeftCombTree(n)$ as parse words for $T'$ and $\LeftCombTree(n-1)$.  If $i > 1$, then leaf $i$ is a right leaf in $\LeftCombTree(n-1)$, so Proposition~\ref{bottom-comb} gives the same number of parse words as for $T'$ and $\LeftCombTree(n-1)$.
\end{proof}

Csar, Sengupta, and Suksompong~\cite{Csar--Sengupta--Suksompong} have recently provided a generalization of Theorem~\ref{comb-general}.
They consider a partial ordering on the set of $n$-leaf binary trees arising from the rotation operation.
They show that if $T_1$ and $T_2$ are $n$-leaf binary trees whose join exists under this partial ordering, then $|\ParseWords(T_1, T_2)|$ is a certain power of $2$.

The following theorem is an extension of Theorem~\ref{comb-general} to turn trees, although we lose the enumeration.

\begin{theorem}\label{turn-general}
Let $n \geq 4$.  Let $T_1$ be an $n$-leaf binary tree and $T_2$ an $n$-leaf left turn tree.  Then $T_1$ and $T_2$ have a common parse word.
\end{theorem}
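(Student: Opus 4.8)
The plan is to induct on $n$, peeling off a pair of bottom leaves from $T_1$ and matching the move on the $T_2$ side against one of the two extension propositions (Proposition~\ref{bottom-bottom} or Proposition~\ref{bottom-comb}), exactly as in the proof of Theorem~\ref{comb-general}. The base case $n = 4$ can be checked directly: the two $4$-leaf left turn trees are $\LeftTurnTree(2,2)$ and $\LeftTurnTree(1,3)$ (equivalently $\LeftCombTree(4)$), and one verifies by hand that each has a common parse word with every $4$-leaf binary tree. (Alternatively, one may reduce to $n = 3$, where the claim is vacuous since the only $3$-leaf left turn tree is a path tree and both $3$-leaf trees have a parse word.)

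For the inductive step, let $T_2 = \LeftTurnTree(m, n-m)$, so $T_2$ corresponds to the word $l^m r^{n-m-2}$. I would split into two cases according to where the bottom leaves of $T_2$ sit. The generic case is $m \geq 1$ and $n - m \geq 3$, so that $T_2$ still has a genuine ``turn'': then the bottom leaves of $T_2$ are leaves $n-1$ and $n$ (a right leaf and its leaf sibling), while the leaves $1, \dots, m+1$ lie along the left-comb part. Pick a pair of bottom leaves of $T_1$, say leaves $i$ and $i+1$, and remove them to get $T_1'$ with $n-1$ leaves. If $i = 1$, I remove leaves $1,2$ from $T_2$ as well; since leaf $1$ is a left leaf of $T_2$ and its sibling is a leaf (the tree hanging below leaf $1$'s sibling in $T_2$ is all of the rest), this is an instance of Proposition~\ref{bottom-bottom} applied at leaf $1$, and $T_2' = \LeftTurnTree(m-1, n-m)$ is still a left turn tree (or a right comb if $m = 1$, which is also handled by Theorem~\ref{comb-general}). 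Wait — more carefully: removing the bottom pair at position $1$ from $T_2$ is legitimate only if leaves $1,2$ are a bottom pair in $T_2$, which happens precisely when $m = 1$; for $m \geq 2$ the sibling of leaf $1$ in $T_2$ is internal. So I instead invoke Proposition~\ref{bottom-comb}: leaf $1$ of $T_2$ is a left leaf, so duplicating leaf $1$ of $T_2'$ recovers $T_2 = \LeftTurnTree(m, n-m)$ from $T_2' = \LeftTurnTree(m-1, n-m)$, matching the attachment of $\smalltree{2}{1}$ to leaf $1$ of $T_1'$. By the inductive hypothesis $T_1'$ and $T_2'$ have a common parse word (if $T_2'$ is a comb, use Theorem~\ref{comb-general}), so by the proposition so do $T_1$ and $T_2$. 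If instead $i > 1$, then in $T_2' = \LeftTurnTree(m-1, n-m)$ the relevant leaf is a right leaf along the right-comb part (for $i$ large enough) or still a left leaf along the left-comb part, and in either case duplicating leaf $i$ of $T_2'$ produces a left turn tree on $n$ leaves with the duplicated pair at position $i$; matching this against attaching $\smalltree{2}{1}$ to leaf $i$ of $T_1'$ via Proposition~\ref{bottom-comb} again transfers the common parse word upward.

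The residual case is $n - m = 2$, i.e.\ $T_2 = \LeftTurnTree(m, 2) = \LeftCombTree(n)$ is itself a comb, which is covered by Theorem~\ref{comb-general}; and the case $m = 0$ is excluded since a left turn tree has $m \geq 1$ (when $m=1$, $\LeftTurnTree(1,n-1) = \LeftCombTree(n)$, again a comb). So every left turn tree is either a comb — handled by Theorem~\ref{comb-general} — or has its bottom pair in positions $n-1, n$, and the induction goes through. The main obstacle is bookkeeping: one must check that after duplicating or removing a leaf, $T_2$ really remains a left turn tree (or a comb) with the bottom pair in a position compatible with applying Proposition~\ref{bottom-comb}, and one must be careful that leaf $i$ of $T_1$ being a \emph{bottom} leaf does not constrain which of the two propositions applies — it is the side structure of $T_2$ at leaf $i$ (left leaf vs.\ right leaf vs.\ bottom pair) that dictates the choice. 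No delicate parse-word analysis is needed; everything reduces to the two extension propositions and the comb base.
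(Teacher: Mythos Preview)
Your overall plan---induct on $n$, strip a bottom pair from $T_1$, and match the move in $T_2$ via Proposition~\ref{bottom-bottom} or Proposition~\ref{bottom-comb}---is exactly the paper's approach, but your case analysis rests on a mislocated picture of $\LeftTurnTree(m, n-m)$. In that tree the left-comb part occupies the \emph{last} $m$ leaf positions $n-m+1,\dots,n$ (these are right leaves at levels $m,m-1,\dots,1$), while the right-comb part occupies positions $1,\dots,n-m$ and contains the bottom pair at positions $n-m-1$ and $n-m$ (not $n-1$ and $n$ as you wrote). With the correct layout, for every $i \neq n-m$ one of the two extension propositions does apply, just as you intend.

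The genuine gap is the single value $i = n-m$: here leaf $i$ is the \emph{right} bottom leaf of $T_2$ and leaf $i+1$ is the first leaf of the left-comb part, so leaves $i,i+1$ are neither a bottom pair in $T_2$ nor obtainable by duplicating a leaf of any $(n-1)$-leaf left turn tree---neither extension proposition fires. If $T_1$ is not a path tree you can dodge this by choosing a different bottom pair, but if $T_1$ \emph{is} a path tree whose unique bottom pair sits at positions $n-m,\,n-m+1$, you are stuck. The paper closes this hole with Proposition~\ref{shared bottom leaf}: since leaf $n-m$ is then a bottom leaf in both path trees, the word $0^{n-m-1} 1\, 0^{m}$ is a common parse word outright. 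Your closing claim that ``everything reduces to the two extension propositions and the comb base'' is therefore not quite right; Proposition~\ref{shared bottom leaf} is the missing third ingredient.
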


\begin{proof}
We work by induction on $n$.  For $n=4$ the result can be verified explicitly.

Now suppose that every $(n-1)$-leaf binary tree has a common parse word with every $(n-1)$-leaf left turn tree.  Let $T_1$ be an $n$-leaf binary tree, and let $T_2$ be an $n$-leaf left turn tree.  Then $T_1$ has a pair of bottom leaves; suppose these are leaves $i$ and $i+1$.

First we consider the case where the $i$th leaf of $T_2$ is the right bottom leaf.  If $T_1$ is a path tree, then $T_1$ and $T_2$ have a common parse word by Proposition~\ref{shared bottom leaf}.  If $T_1$ is not a path tree, then there is another pair of bottom leaves in $T_1$, so we may re-choose $i$ if necessary so that the $i$th leaf of $T_2$ is not the right bottom leaf.

Therefore we may assume that the $i$th leaf of $T_2$ is not the right bottom leaf.  Remove leaves $i$ and $i+1$ from $T_1$ to obtain $T_1'$, which has $n-1$ leaves and so has a common parse word with every $(n-1)$-leaf left turn tree.

If the $i$th leaf of $T_2$ is the left bottom leaf, then we can apply Proposition~\ref{bottom-bottom} to obtain a common parse word for $T_1$ and $T_2$.  Otherwise, leaves $i$ and $i+1$ occur on consecutive levels in $T_2$, so Proposition~\ref{bottom-comb} applies.
\end{proof}

\section{A pair of turn trees}\label{turn trees}

In this section we give three theorems that collectively determine the number of parse words of $\LeftTurnTree(m, n)$ and $\RightTurnTree(k, m + n - k)$.  Note that by Theorem~\ref{turn-general} the number of parse words is nonzero.

\begin{theorem}\label{unique turn-turn count}
For $m \geq 1$, $k \geq 1$, and $\max(2, k - m + 2) \leq n \leq k$,
\[
	|\ParseWords(\LeftTurnTree(m, n), \RightTurnTree(k, m + n - k))| = 1.
\]
\end{theorem}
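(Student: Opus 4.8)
The plan is to induct on $n$, using the extension lemmas of Section~\ref{general families} --- specifically Proposition~\ref{bottom-comb} --- to pass from the pair $\bigl(\LeftTurnTree(m,n),\RightTurnTree(k,m+n-k)\bigr)$ to the smaller pair $\bigl(\LeftTurnTree(m,n-1),\RightTurnTree(k-1,m+n-k)\bigr)$. First I would read off, from the $\{l,r\}$-encoding, the positions and levels of the relevant leaves. One checks that the two bottom leaves of $\LeftTurnTree(m,n)=l^m r^{n-2}$ are leaves $n-1$ and $n$, and that leaf $1$ of $\RightTurnTree(k,\,\cdot)=r^k l^{\,\cdot}$ always sits at level $1$. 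Crucially, when $n\le k$ the leaves $n-1$ and $n$ of $\RightTurnTree(k,m+n-k)$ both lie in the initial right-comb portion of that tree: leaf $n-1$ is a left leaf at level $n-1$, leaf $n$ is a left leaf at level $n$, and leaf $n-1$ is the sibling of the parent of leaf $n$. In other words, $\RightTurnTree(k,m+n-k)$ is obtained from $\RightTurnTree(k-1,m+n-k)$ by duplicating leaf $n-1$ in the sense of Proposition~\ref{bottom-comb}. (The hypotheses are vacuous unless $m\ge 2$, since $k-m+2\le n\le k$ forces $m\ge 2$; this in turn guarantees that the sibling of leaf $n-1$ in the smaller tree is genuinely an internal vertex, as Proposition~\ref{bottom-comb} requires.)

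With these identifications the inductive step is routine. Deleting the bottom cherry of $\LeftTurnTree(m,n)$ replaces $l^m r^{n-2}$ by $l^m r^{n-3}$, so $\LeftTurnTree(m,n)$ is exactly $\LeftTurnTree(m,n-1)$ with $\smalltree{2}{1}$ attached at leaf $n-1$. Applying Proposition~\ref{bottom-comb} at $i=n-1$ to the pair $\bigl(\LeftTurnTree(m,n-1),\RightTurnTree(k-1,m+n-k)\bigr)$ then yields
\[
	\left|\ParseWords\bigl(\LeftTurnTree(m,n),\RightTurnTree(k,m+n-k)\bigr)\right| = \left|\ParseWords\bigl(\LeftTurnTree(m,n-1),\RightTurnTree(k-1,m+n-k)\bigr)\right|.
\]
The triple $(m,n-1,k-1)$ satisfies the hypotheses of the theorem whenever $n\ge 3$: the inequalities $n-1\le k-1$ and $n-1\ge (k-1)-m+2$ are literally $n\le k$ and $n\ge k-m+2$, while $n-1\ge 2$ needs $n\ge 3$. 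So the inductive hypothesis gives $1$ on the right-hand side.

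For the base case $n=2$ we have $\LeftTurnTree(m,2)=\LeftCombTree(m+2)$, and the hypotheses force $m\ge k\ge 2$, so $\RightTurnTree(k,m+2-k)$ is a genuine $(m+2)$-leaf tree whose leaf $1$ lies at level $1$; hence Theorem~\ref{comb-general} gives $\left|\ParseWords\bigl(\LeftCombTree(m+2),\RightTurnTree(k,m+2-k)\bigr)\right|=2^{1-1}=1$. (When $m=k$ this tree is $\RightCombTree(m+2)$ and the count is also immediate from Theorem~\ref{comb-comb}.)

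The argument is almost entirely bookkeeping, and the single place that uses the defining inequality $n\le k$ is the claim that leaves $n-1$ and $n$ of $\RightTurnTree(k,m+n-k)$ are adjacent left leaves on consecutive levels; for $n>k$ this fails --- leaf $n$ has then moved to or past the turn --- which is why a separate treatment, given in the remaining two theorems of this section, is needed in that regime. The main obstacle is therefore getting the leaf-position bookkeeping exactly right so that Proposition~\ref{bottom-comb} applies verbatim; once that is done, no further estimates or case analysis are required.
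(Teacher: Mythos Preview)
Your proof is correct and follows essentially the same route as the paper's: both apply Proposition~\ref{bottom-comb} at the bottom cherry of $\LeftTurnTree(m,n)$ to obtain
\[
|\ParseWords(\LeftTurnTree(m,n),\RightTurnTree(k,m+n-k))|=|\ParseWords(\LeftTurnTree(m,n-1),\RightTurnTree(k-1,m+n-k))|,
\]
iterate until $n=2$, and then invoke Theorem~\ref{comb-general} on $\LeftCombTree(m+2)$ paired with $\RightTurnTree(k-(n-2),m+n-k)$. Your write-up simply supplies more of the leaf-position bookkeeping that the paper leaves implicit.
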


\begin{proof}
The bottom leaves of $\LeftTurnTree(m,n)$ (which are leaves $n-1$ and $n$) correspond to leaves which are on consecutive levels in $\RightTurnTree(k, m + n - k)$, so we can apply Proposition~\ref{bottom-comb} to see that
\begin{multline*}
	|\ParseWords(\LeftTurnTree(m, n), \RightTurnTree(k, m + n - k))| \\
	= |\ParseWords(\LeftTurnTree(m, n-1), \RightTurnTree(k-1, m + n - k))|.
\end{multline*}
Now, our hypothesis applies to this new, smaller tree pair, so we may continue reducing in the same way until we have reduced the right comb in the left turn tree entirely away.  At this point, we are considering the trees $\LeftTurnTree(m, 2) = \LeftCombTree(m + 2)$ and $\RightTurnTree(k - (n-2), m + n - k)$, which have a unique parse word class by Theorem~\ref{comb-general}.
\end{proof}

Let
\[
	a(m, k) = |\ParseWords(\LeftTurnTree(m, k+1), \RightTurnTree(k, m+1))|.
\]
By considering the left--right reflections of these two trees, we see that $a(m, k) = a(k, m)$.  Theorem~\ref{turn-turn count} determines the number of parse words of $\LeftTurnTree(m, n)$ and $\RightTurnTree(k, m + n - k)$ for $n \geq k + 2$ in terms of $a(m, k)$, and Theorem~\ref{2-parameter turn-turn count} evaluates $a(m, k)$.

\begin{theorem}\label{turn-turn count}
For $m \geq 1$, $k \geq 1$, and $n \geq k + 2$,
\[
	|\ParseWords(\LeftTurnTree(m, n), \RightTurnTree(k, m + n - k))| = 2 a(m, k).
\]
\end{theorem}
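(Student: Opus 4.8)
The plan is to reduce the case $n \geq k+2$ down to the threshold case $n = k+1$ by repeatedly peeling off the bottom of the right comb inside $\LeftTurnTree(m,n)$, tracking the effect on the parse word count. The bottom leaves of $\LeftTurnTree(m,n)$ are leaves $n-1$ and $n$, and these sit on consecutive levels of $\RightTurnTree(k, m+n-k)$ (since $n \geq k+2$ means the right comb portion $l^{n-2}$ of the left turn tree extends past the point where the right turn tree has switched to its $l$-comb). So as in the proof of Theorem~\ref{unique turn-turn count}, Proposition~\ref{bottom-comb} applies and gives
\[
	|\ParseWords(\LeftTurnTree(m, n), \RightTurnTree(k, m + n - k))| = |\ParseWords(\LeftTurnTree(m, n-1), \RightTurnTree(k-1, m + n - k))|.
\]
Iterating this reduction, each step decreases both $n$ and $k$ by one while keeping $m+n-k$ (hence $m$) fixed, and the hypothesis $n \geq k+2$ is preserved, so we may continue until the second parameter of the first tree drops to $k+1$ — that is, until we reach the pair $\LeftTurnTree(m, k+1)$ and $\RightTurnTree(1, m+k)$. (Concretely, after $n - k - 1$ applications we land on $\LeftTurnTree(m, n - (n-k-1)) = \LeftTurnTree(m, k+1)$ paired with $\RightTurnTree(k - (n-k-1), m+n-k) = \RightTurnTree(2k+1-n, m+n-k)$; one must check the arithmetic so that the second argument equals $1$, i.e. $2k+1-n = 1$ forces $n = 2k$, so in fact the stopping point needs care — see below.)

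The cleanest route is therefore to iterate Proposition~\ref{bottom-comb} exactly down to the case $n = k+1$, at which point the right turn tree has first argument $k - (n - (k+1)) = 2k+1-n$, which is $\geq 1$ precisely when $n \leq 2k+1$. When $n > 2k+1$ we would instead hit first argument $1$ before $n$ reaches $k+1$; but once the right turn tree is $\RightTurnTree(1, \cdot)$ the reduction stalls, so the honest statement is that we reduce to $\RightTurnTree(1, m+n-k)$ paired with some $\LeftTurnTree(m, n')$ where $n' \geq k+2$ still. Rather than juggle cases, the right approach is: reduce via Proposition~\ref{bottom-comb} until the first argument of the right turn tree is $1$, landing on the pair $\LeftTurnTree(m, n-k+1)$ and $\RightTurnTree(1, m+n-k)$, and then invoke Theorem~\ref{turn-turn} directly on that pair. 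Theorem~\ref{turn-turn} says this count is $2$ whenever $n - k + 1 \geq 3$, i.e. $n \geq k+2$, which is exactly our hypothesis. Since each Proposition~\ref{bottom-comb} step preserves the count, we get $2 a(m,k)$ — except Theorem~\ref{turn-turn} gives the bare number $2$, not $2a(m,k)$, so this shows $a(m,k) = 1$, which cannot be right in general.

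The resolution, and the step I expect to be the main obstacle, is getting the bookkeeping exactly right: Proposition~\ref{bottom-comb} applies only while leaves $n-1, n$ of the left turn tree lie on consecutive levels of the right turn tree, and this fails once the reduction has eaten into the $l$-comb shared region. So the correct iteration runs from $\LeftTurnTree(m,n)$ down to $\LeftTurnTree(m, k+1)$ paired with $\RightTurnTree(k, m+n-k)$ — wait, $k$ is fixed on the right while $n$ drops — one must recheck which parameter Proposition~\ref{bottom-comb} decrements. Assuming it decrements the second left argument and the first right argument (as in Theorem~\ref{unique turn-turn count}), the stopping condition is when $\max(2, k-m+2) \leq n-? \leq k$ fails, i.e. when we cross from the regime of Theorem~\ref{turn-turn count} into that of Theorem~\ref{unique turn-turn count}; but we want to stop at the \emph{boundary} $n = k+1$, which is governed by $a(m,k)$ itself. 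Thus the real content is: \emph{define} $a(m,k)$ as the boundary value, show Proposition~\ref{bottom-comb} reduces every $n \geq k+2$ case to the $n = k+1$ case without changing the count, and conclude the count equals $a(m,k)$ times the number of valid ways the final bottom-leaf label can be completed — and that last multiplicity is $2$ because, at the threshold, the two bottom leaves of the left turn tree become a genuine bottom pair in the right turn tree (not merely consecutive levels), so Proposition~\ref{bottom-bottom} rather than Proposition~\ref{bottom-comb} governs the very last step, contributing the factor of $2$. I would carry out the reduction, identify precisely the level at which Proposition~\ref{bottom-comb} ceases to apply and Proposition~\ref{bottom-bottom} takes over for one step, and verify the index arithmetic so that the leftover pair is exactly the $a(m,k)$ pair.
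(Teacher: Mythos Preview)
Your overall strategy is exactly the paper's: iterate Proposition~\ref{bottom-comb} to shrink the pair, then apply Proposition~\ref{bottom-bottom} once at the end to pick up the factor of~$2$ and land on the defining pair for $a(m,k)$. The gap is in the bookkeeping, and it is precisely the one you flagged but never resolved.

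Your displayed recursion is wrong. In $\RightTurnTree(k, m+n-k)$ the leaves $1,\dots,k$ are left children (coming from the $r^k$ portion), leaves $k+1$ and $k+2$ are the bottom pair, and leaves $k+3,\dots,m+n$ are right children (coming from the $l^{m+n-k-2}$ portion). For $n > k+2$ the leaves $n-1$ and $n$ are therefore both right children, sitting on consecutive levels $m+k+2$ and $m+k+1$; un-duplicating them shortens the $l$-comb, not the $r$-comb. The correct step is
\[
	|\ParseWords(\LeftTurnTree(m, n), \RightTurnTree(k, m + n - k))| = |\ParseWords(\LeftTurnTree(m, n-1), \RightTurnTree(k, m + n - k - 1))|,
\]
with $k$ held fixed. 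You copied the recursion from the proof of Theorem~\ref{unique turn-turn count}, but there the hypothesis is $n \leq k$, which places leaves $n-1$ and $n$ in the $r^k$ portion instead --- that is why $k$ decrements in that argument and not here.

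With the correct recursion the proof is immediate: $n$ drops one step at a time (while $m$ and $k$ stay fixed) until $n = k+2$, at which point leaves $n-1 = k+1$ and $n = k+2$ are the bottom pair of \emph{both} trees. Proposition~\ref{bottom-bottom} then supplies the factor of~$2$ and reduces to $\LeftTurnTree(m, k+1)$ paired with $\RightTurnTree(k, m+1)$, whose parse-word count is $a(m,k)$ by definition. Your detours through $\RightTurnTree(1,\cdot)$ and Theorem~\ref{turn-turn} were artifacts of decrementing the wrong parameter; none of that case analysis is needed.
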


\begin{proof}
If $n > k+2$, then the bottom leaves of $\LeftTurnTree(m,n)$ correspond to leaves which are on consecutive levels in $\RightTurnTree(k, m + n - k)$, so we can apply Proposition~\ref{bottom-comb} to see that
\begin{multline*}
	|\ParseWords(\LeftTurnTree(m, n), \RightTurnTree(k, m + n - k))| \\
	= |\ParseWords(\LeftTurnTree(m, n-1), \RightTurnTree(k, m + n - k-1))|.
\end{multline*}
If our hypothesis applies to this new, smaller tree pair, we may continue reducing in exactly the same way until we reach $\LeftTurnTree(m, k + 2)$ and $\RightTurnTree(k, m + 2)$.  Leaves $k$ and $k+1$ are bottom leaves in both these trees, so by Proposition~\ref{bottom-bottom} we have
\begin{multline*}
	|\ParseWords(\LeftTurnTree(m, n), \RightTurnTree(k, m + n - k))| \\
	= 2 |\ParseWords(\LeftTurnTree(m, k + 1), \RightTurnTree(k, m + 1))|. \qedhere
\end{multline*}
\end{proof}

For the final result concerning the number of parse words of two turn trees, it turns out to be convenient to focus on the labels of the internal vertices rather than of the leaves.  We form a word consisting of the internal vertex labels of a labeled path tree by reading these labels from top to bottom.

A word on $\{0,1,2\}$ is \emph{alternating} if no two consecutive letters are equal.  If the internal vertices of a path tree are labeled with $w$, then the labeling can be extended to a parse word for the tree precisely when $w$ is alternating.  Therefore it will be important to know the sizes of certain sets of alternating words.  Let $A_m$ be the set of length-$m$ alternating words of the form $0 v_2 \cdots v_m$, where $v_2, v_m \in \{1,2\}$.  Let $B_m$ be the set of length-$m$ alternating words of the form $0 v_2 \cdots v_m$, where $v_2 \in \{1,2\}$ and $v_m \in \{0,2\}$.

\begin{proposition}\label{alternating counts}
For $m \geq 2$, $|A_m| = (2^m + 2 (-1)^m)/3$ and $|B_m| = (2^m - (-1)^m)/3$.
\end{proposition}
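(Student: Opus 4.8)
The plan is to set up a recurrence for each of $|A_m|$ and $|B_m|$ by conditioning on the last letter of an alternating word, and then solve the resulting linear recurrences with the given initial conditions. First I would introduce auxiliary counts: for a fixed first letter $0$, let $a_m$ be the number of length-$m$ alternating words $0 v_2 \cdots v_m$ with $v_2 \in \{1,2\}$ and $v_m = 0$, and let $b_m$ be the number with $v_2 \in \{1,2\}$ and $v_m \in \{1,2\}$ — so $b_m = |A_m|$ and $a_m + b_m$ counts all alternating words of length $m$ beginning $0 v_2 \cdots$ with $v_2 \in \{1,2\}$, while $|B_m| = a_m + (\text{number ending in }2)$. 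Since the alphabet has three letters and the words are alternating, the number of such words ending in a particular one of the two ``non-$0$'' letters is $\frac12 b_m$, and the number ending in $0$ is $a_m$; hence $|B_m| = a_m + \tfrac12 b_m$ and the total number of length-$m$ alternating words starting with $0$ (and with $v_2\neq 0$) is $a_m + b_m = 2^{m-1}$, since after choosing $v_2\in\{1,2\}$ each subsequent letter has exactly $2$ choices.

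Next I would extract the recurrence. Appending a letter to an alternating word of length $m-1$ that starts $0v_2\cdots$ with $v_2\in\{1,2\}$: from a word ending in $0$ there are $2$ ways to extend (to a word ending in $\{1,2\}$), and from a word ending in $\{1,2\}$ there are $2$ extensions, one of which ends in $0$ and one in the other non-$0$ letter. This gives $a_m = \tfrac12 b_{m-1}$ and $b_m = 2a_{m-1} + \tfrac12 b_{m-1}$, valid for $m \geq 3$, together with $a_2 = 0$, $b_2 = 2$ (the words $01$ and $02$). Substituting, $b_m = 2a_{m-1} + \tfrac12 b_{m-1} = b_{m-2} + \tfrac12 b_{m-1}$ for $m \geq 4$; equivalently $2b_m = b_{m-1} + 2b_{m-2}$. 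Actually it is cleaner to use the total: from $a_m + b_m = 2^{m-1}$ and $a_m = \tfrac12 b_{m-1}$ we get $b_m = 2^{m-1} - \tfrac12 b_{m-1}$, a first-order recurrence. Solving it with $b_2 = 2$ yields $b_m = (2^m + 2(-1)^m)/3$, i.e.\ $|A_m| = (2^m + 2(-1)^m)/3$; one checks $b_2 = (4+2)/3 = 2$ and that $b_m = 2^{m-1} - \tfrac12 b_{m-1}$ holds since $(2^m + 2(-1)^m)/3 + (2^{m-1} + 2(-1)^{m-1})/6 = 2^{m-1}$. Then $|B_m| = a_m + \tfrac12 b_m = \tfrac12 b_{m-1} + \tfrac12 b_m = \tfrac12\big((2^{m-1}+2(-1)^{m-1})/3 + (2^m+2(-1)^m)/3\big) = (2^m - (-1)^m)/3$ after simplification, or alternatively $|B_m| = 2^{m-1} - \tfrac12 b_m = (2^m - (-1)^m)/3$ directly.

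The argument is essentially a routine linear-recurrence computation once the bookkeeping of ``which of the two non-$0$ letters the word ends in'' is done carefully, so I expect no serious obstacle; the one point demanding care is the symmetry claim that among the $b_m$ alternating words starting with $0$ and ending in $\{1,2\}$, exactly half end in each of $1$ and $2$. This follows from the transposition of $1$ and $2$ being an automorphism of the set of alternating words fixing the prefix constraint $v_1 = 0$, $v_2 \in \{1,2\}$, which is immediate. An alternative, perhaps cleaner, write-up would avoid $a_m, b_m$ entirely: directly establish $|A_m| + |A_{m-1}|$-type identities, or simply verify the closed forms by induction using $|A_m| = 2^{m-1} - |A_{m-1}| + 0$-type relations — but the recurrence $b_m + \tfrac12 b_{m-1} = 2^{m-1}$ with $b_2 = 2$ is the most economical path, and I would present it that way.
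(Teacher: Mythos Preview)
Your overall strategy is sound and in fact more economical than the paper's: the paper refines to counts $a_i(m)$ of alternating words beginning $01$ and ending in a fixed letter, unrolls a recurrence to a binomial sum $\sum_{j\equiv r\pmod 3}\binom{m-2}{j}$, and evaluates that. Your idea of using the total $a_m+b_m=2^{m-1}$ together with the $1\leftrightarrow 2$ symmetry to get a first-order recurrence for $b_m=|A_m|$ is cleaner and avoids the binomial machinery entirely.

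However, there is a slip in the derivation that propagates through the rest. From a length-$(m-1)$ alternating word ending in $1$ or in $2$ there is exactly \emph{one} extension ending in $0$, so the number of length-$m$ words ending in $0$ is $a_m=b_{m-1}$, not $\tfrac12 b_{m-1}$. (For instance, $a_3=2$ from $010,020$, while $b_2=2$.) With the correct relation the recurrence becomes
\[
b_m \;=\; 2^{m-1}-a_m \;=\; 2^{m-1}-b_{m-1},
\]
and the closed form $b_m=(2^m+2(-1)^m)/3$ does satisfy $b_m+b_{m-1}=2^{m-1}$; your claimed check that $(2^m+2(-1)^m)/3+(2^{m-1}+2(-1)^{m-1})/6=2^{m-1}$ is simply false (at $m=3$ the left side is $2+1=3\neq 4$). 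Likewise your first computation of $|B_m|=\tfrac12 b_{m-1}+\tfrac12 b_m$ is off for the same reason, though your alternative $|B_m|=2^{m-1}-\tfrac12 b_m$ is correct and, once $b_m$ is right, immediately gives $(2^m-(-1)^m)/3$. So: fix $a_m=b_{m-1}$, and the proof goes through as you intended.
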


\begin{proof}
Let $a_i(m)$ be the number of length-$m$ alternating words on $\{0,1,2\}$ beginning with $01$ and ending with $\Mod(i,3)$.  Then
\begin{align*}
	a_i(m)
	&= a_{i+1}(m-1) + a_{i+2}(m-1) \\
	&= a_{i+2}(m-2) + 2 a_{i+3}(m-2) + a_{i+4}(m-2) \\
	&= a_{i+3}(m-3) + 3 a_{i+4}(m-3) + 3 a_{i+5}(m-3) + a_{i+6}(m-3) \\
	&\;\:\vdots \\
	&= \sum_{j=0}^n \binom{n}{j} a_{i+n+j}(m-n) \\
	&\;\:\vdots \\
	&= \sum_{j=0}^{m-2} \binom{m-2}{j} a_{i+m-2+j}(2) \\
	&= \sum_{j \equiv -(i+m) \bmod 3} \binom{m-2}{j}
\end{align*}
since $a_0(2) = a_2(2) = 0$ and $a_1(2) = 1$.  Therefore
\[
	a_0(m) = \sum_{j \equiv -m \bmod 3} \binom{m-2}{j} = \frac{1}{3} \left(2^{m-2} + (-1)^{m-1}\right).
\]

Since $a_1(m)$ and $a_2(m)$ also count alternating words of the forms $02 \cdots 2$ and $02 \cdots 1$ respectively, we have
\[
	|A_m| = 2 a_1(m) + 2 a_2(m) = 2 \left(2^{m-2} - a_0(m)\right) = \frac{1}{3} \left(2^m + 2 (-1)^m\right).
\]
Similarly,
\[
	|B_m| = 2 a_0(m) + a_1(m) + a_2(m) = a_0(m) + 2^{m-2} = \frac{1}{3} \left(2^m - (-1)^m\right). \qedhere
\]
\end{proof}

Next we provide a simple recurrence satisfied by $a(m, k)$.  Unfortunately, we do not know a correspondingly simple proof.

\begin{theorem}\label{2-parameter turn-turn count}
For $m \geq 1$ and $k \geq 1$,
\[
	a(m+3, k) - 2 a(m+2, k) - a(m+1, k) + 2 a(m, k) = 0.
\]
\end{theorem}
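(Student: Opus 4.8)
The plan is to express $a(m,k)$ as a fixed bilinear form in powers of a single small transfer matrix, and then read the recurrence off from that matrix's eigenvalues.

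\emph{Reduction to internal labels and a one-pass description.} First I would rewrite everything in terms of internal vertex labels. Since both trees are path trees, the observation of Section~\ref{turn trees} applies: a parse word of a path tree is the same as an alternating word recording the internal labels read top to bottom, together with an ordering of the two bottom leaves. Writing the two trees out explicitly, $T_1 = \LeftTurnTree(m,k+1)$ has its top $m$ internal vertices $u_0 \to u_1 \to \cdots \to u_{m-1}$ forming a left comb (with $u_0$ the root) whose right children are the leaves $w_{m+k+1}, w_{m+k}, \dots, w_{k+2}$, and below $u_{m-1}$ hangs a ``middle'' of $k$ vertices carrying leaves $w_1, \dots, w_{k+1}$, whose shape depends only on $k$; meanwhile $T_2 = \RightTurnTree(k,m+1)$ has its bottom $m$ internal vertices $v_k \to v_{k+1} \to \cdots \to v_{m+k-1}$ forming a left comb whose right children are the \emph{same} $m$ leaves $w_{m+k+1}, \dots, w_{k+2}$, sitting below a fixed middle of $k$ vertices (carrying leaves $w_1, \dots, w_k$ and imposing the common-root condition on the two roots). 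Thus the $m$ shared leaves $w_{k+2}, \dots, w_{m+k+1}$ get determined by descending the two combs in lockstep.

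\emph{The transfer matrix.} At the $i$-th step of this descent the relevant state is the pair $(x_i, y_{k+i})$ of internal labels at the two current comb vertices; picking the shared leaf forces the next state, since the leaf must avoid both current labels (two admissible leaves if those labels agree, one if they differ) and each new label is the remaining third letter of $\{0,1,2\}$. So a generic step applies a fixed $9 \times 9$ nonnegative matrix $M$ on $\{0,1,2\}^2$, and a quick check shows $M$ is block diagonal: on the three ``diagonal'' states $(x,x)$ it acts as $J-I$ (all-ones minus identity on $\R^3$), and on the six ``off-diagonal'' states it acts as the involution $(x,y) \mapsto (y,x)$. Folding the remaining $m$-independent data --- the two middles, the common-root condition, the two bottom-leaf orderings, and the single exceptional final step (where the $T_2$-comb reaches its bottom leaf pair) --- into a boundary matrix $R_k$ depending only on $k$, I would obtain $a(m,k) = \sum_{s,s'} R_k(s,s')\,(M^{m-1})_{s,s'}$. (One counts ordinary words and divides by $6$: since $m + k + 1 \ge 3$, every parse word uses at least two letters and hence has trivial stabilizer under permuting the alphabet.)

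\emph{Eigenvalues and conclusion.} Now $J - I$ has eigenvalues $2, -1, -1$, and an involution that is a product of three disjoint transpositions has eigenvalues $1, 1, 1, -1, -1, -1$, so $M$ is diagonalizable with spectrum in $\{2, 1, -1\}$ and minimal polynomial $(t-2)(t-1)(t+1) = t^3 - 2t^2 - t + 2$. Hence $M^{p+3} - 2M^{p+2} - M^{p+1} + 2M^p = M^p(M^3 - 2M^2 - M + 2I) = 0$ for every $p \ge 0$, and substituting $p = m-1$ into the bilinear formula gives $a(m+3, k) - 2a(m+2, k) - a(m+1, k) + 2a(m, k) = 0$. (As a byproduct, $a(m,k) = \alpha_k 2^m + \beta_k (-1)^m + \gamma_k$ for constants depending only on $k$, which should help with Theorem~\ref{2-parameter turn-turn count}.) The eigenvalue computation is routine once $M$ is identified; the real work --- and the main obstacle --- is the bookkeeping behind the bilinear formula: confirming that $(x_i, y_{k+i}) \in \{0,1,2\}^2$ is a \emph{complete} state, so that no other feature of the partial labeling constrains later choices; correctly excising the two fixed middles and the one anomalous last step; and verifying that $R_k$ genuinely carries no hidden dependence on $m$. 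The small cases $m = 1, 2$ (descents with no generic step, or just one) deserve a glance but are covered by the same identity $M^3 - 2M^2 - M + 2I = 0$, which already holds at $p = 0$.
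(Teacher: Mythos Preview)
Your transfer-matrix argument is correct and takes a genuinely different route from the paper's proof.  Both proofs work with the internal vertex labels of the two path trees and both ultimately show that $a(m,k)$ is, for each fixed $k$, a linear combination of $2^m$, $1$, and $(-1)^m$.  The paper gets there by explicit case analysis: it fixes the internal word $w$ of the top right comb of $\RightTurnTree(k,m+1)$, splits into cases according to the last letter of $w$, the parity of $m$, and whether $w$ uses two or three letters, and then enumerates compatible internal words for the other tree using the closed forms $|A_m| = (2^m + 2(-1)^m)/3$ and $|B_m| = (2^m - (-1)^m)/3$ from Proposition~\ref{alternating counts}.  Your approach instead packages the parallel descent down the two left combs as iteration of a single $9\times 9$ matrix and reads the recurrence directly from its minimal polynomial $(t-2)(t-1)(t+1)$, never computing $a(m,k)$ itself.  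The transfer-matrix route is cleaner and makes the characteristic polynomial structural rather than an artifact of summing closed forms; the paper's case analysis is more laborious but delivers the explicit product formula for $a(m,k)$ displayed after the theorem.

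Two remarks on the bookkeeping you flagged.  First, a minor slip: $v_{m+k-1}$ has two leaf children, so strictly only $v_k,\dots,v_{m+k-2}$ form a left comb; but since the right children of all of $v_k,\dots,v_{m+k-1}$ are nonetheless the leaves $w_{m+k+1},\dots,w_{k+2}$, your generic transition $(u_i,v_{k+i}) \to (u_{i+1},v_{k+i+1})$ is valid for $i=0,\dots,m-2$, and the last step correctly lands in $R_k$.  Second, your worry about completeness of the state is unfounded: the only constraints on leaf $m+k+1-i$ are that it differ from its two parents $u_i$ and $v_{k+i}$, and the only information this leaf passes further down either tree is $(u_{i+1},v_{k+i+1})$.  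The root condition $u_0 = v_0$ and every constraint involving leaves $1,\dots,k+2$ touch only the four boundary labels $u_0,\,v_k,\,u_{m-1},\,v_{m+k-1}$, whose interaction depends on $k$ alone; so $R_k$ carries no hidden $m$-dependence, and the bilinear formula holds for all $m\ge 1$ (with $M^0=I$ covering $m=1$).
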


Initial conditions that suffice to completely determine $a(m, k)$ from this recurrence are $a(1, 1) = 1$, $a(1, 2) = 1$, $a(1, 3) = 1$, $a(2, 2) = 4$, $a(2, 3) = 5$, and $a(3, 3) = 3$.  The particular solution can be written as the matrix product
\[
	a(m, k)
	= \frac{1}{4}
	\begin{pmatrix}
		2/3 \cdot 2^m \\
		1 \\
		5/3 \cdot (-1)^m
	\end{pmatrix}^\top
	\begin{pmatrix}
		1/2 & 1 & 1 \\
		1 & 1 & -1 \\
		1 & -1 & 1/5
	\end{pmatrix}
	\begin{pmatrix}
		2/3 \cdot 2^k \\
		1 \\
		5/3 \cdot (-1)^k
	\end{pmatrix}.
\]

\begin{proof}
Let $m \geq 2$ and $k \geq 2$, and let $A_m$ and $B_m$ be as above.  Let
\begin{align*}
	1_m(w) &= \left\{((01)^{m/2} w, w (10)^{m/2})\right\}, \\
	2_m(w) &= \left\{((02)^{m/2} w, w (20)^{m/2})\right\}, \\
	A_m(w) &= \left\{(v w, w v) : v \in A_m\right\}, \\
	B_m((01)^{k/2}) &= \left\{(v (10)^{k/2}, (01)^{k/2} v) : v \in B_m\right\}.
\end{align*}

We consider the set of pairs $(L, R)$ of length-$(m+k)$ (alternating) words such that $R = 01\cdots$ and such that respectively labeling the internal vertices of $\LeftTurnTree(m, k+1)$ and $\RightTurnTree(k, m+1)$ with $L$ and $R$ produces a parse word for the pair.  Such pairs $(L, R)$ are in bijection with equivalence classes of parse words for this tree pair as follows.  The internal vertex labels of a path tree determine the labels of all leaves except the bottom leaves.  Since $\LeftTurnTree(m, k+1)$ and $\RightTurnTree(k, m+1)$ do not share both bottom leaves, labeling the internal vertices with the pair $(L, R)$ determines a unique parse word.  We may choose representative parse words so that the internal vertex labels of $\RightTurnTree(k, m+1)$ begin with $01$ (since the first two labels cannot be the same).

Let $w = 01\cdots$ be the length-$k$ prefix of $R$.  Thus the internal vertices of the right comb of $\RightTurnTree(k, m+1)$ are labeled with letters from $w$, and the first letter of the parse word is $2$.  We show that if $w$ contains all three letters then the set of pairs $(L, R)$ is
\[
\begin{cases}
	\{\}
		& \text{if $w$ ends in $0$ and $m$ is odd} \\
	1_m(w) \cup 2_m(w)
		& \text{if $w$ ends in $0$ and $m$ is even} \\
	A_m(w)
		& \text{if $w$ ends in $1$ or $2$ and $m$ is odd} \\
	A_m(w) \cup 2_m(w)
		& \text{if $w$ ends in $1$ and $m$ is even} \\
	A_m(w) \cup 1_m(w)
		& \text{if $w$ ends in $2$ and $m$ is even,}
\end{cases}
\]
and if $w = (01)^{k/2} = 0101\cdots$ contains only two letters then this set is
\[
\begin{cases}
	\left\{((01)^{(m+k)/2}, (01)^{(m+k)/2})\right\}
		& \text{if $w$ ends in $0$ and $m$ is odd} \\
	1_m(w) \cup 2_m(w)
		& \text{if $w$ ends in $0$ and $m$ is even} \\
	A_m(w) \cup B_m(w)
		& \text{if $w$ ends in $1$ and $m$ is odd} \\
	A_m(w) \cup B_m(w) \cup 2_m(w)
		& \text{if $w$ ends in $1$ and $m$ is even.}
\end{cases}
\]

To see this, first suppose that $L = v w$ and $R = w v$ for some $v$.  Then $v$ begins with $0$, so $w$ does not end in $0$, and every $v \in A_m$ produces a parse word.

Next suppose that $L = v' w$ and $R = w v$ for some $v' \neq v$.  Then in fact $v$ and $v'$ differ in every position; in particular, $v$ begins with some letter $j \neq 0$.  Let $i \in \{1, 2\}$ such that $i \neq j$.  Then the final leaf receives the label $i$ since it is a child of a $0$ leaf in $\LeftTurnTree(m, k+1)$ and a child of a $j$ leaf in $\RightTurnTree(k, m+1)$.  It follows that $v = (j0)^{m/2}$ and $v' = (0j)^{m/2}$; therefore $m$ is even, and choosing $j$ to be either $1$ or $2$ produces a parse word as long as it differs from the last letter of $w$.

Finally, suppose that $L = v w'$ for some length-$k$ word $w' \neq w$.  Then $w$ and $w'$ differ in every position; in particular, $w'$ begins with $1$, and it follows that $w = (01)^{k/2}$ and $w' = (10)^{k/2}$.  If $w$ ends in $0$, then $L = R = (01)^{(m+k)/2}$, yielding the parse word $2^k 0 2^m$.  If $w$ ends in $1$, then every $v \in B_m$ produces a parse word, and $R = w v$.

Since we know the sizes of all these sets by Proposition~\ref{alternating counts}, we can enumerate the set of internal word pairs $(L, R)$ and obtain an expression for $a(m, k)$, which as expected is symmetric in $m$ and $k$.  For fixed $k$ the expression is a linear combination of $2^m$, $1$, and $(-1)^m$, so it satisfies the recurrence stated in the theorem, which can be written
\[
	(M - 2) (M - 1) (M + 1) \, a(m, k) = 0,
\]
where $M$ is the forward shift operator in the variable $m$.

When $m = 1$ or $k = 1$ one of the two trees is a comb tree, and by Theorem~\ref{comb-general} we have $a(m, k) = 1$, which one checks is also what the general expression for $a(m, k)$ gives upon setting $m = 1$ or $k = 1$.
\end{proof}

\section{Reducing a pair of trees}\label{reducing}

How might one proceed from the theorems of the previous sections to a proof that every two $n$-leaf path trees parse a common word?  Here we introduce two notions of reducibility --- ways to reduce the problem of finding a parse word for a pair of trees to finding parse words for smaller pairs --- and give some related conjectures.

\subsection{Decomposable pairs}

Recall that if $T_1$ and $T_2$ are $n$-leaf trees such that leaves $i$ and $i+1$ are siblings in both trees, then Proposition~\ref{bottom-bottom} reduces the problem of finding a parse word for $T_1$ and $T_2$ to the problem of finding a parse word for the pair of $(n-1)$-leaf trees in which the common $\smalltree{2}{1}$ has been removed.  Our first observation is that there is nothing special about $\smalltree{2}{1}$; if the two trees have any common branch system in the same position, then we can decompose the trees.  For example, the $8$-leaf trees
\[
	T_1 = \vcentergraphics{binarytree8-69} \qquad T_2 = \vcentergraphics{binarytree8-231}
\]
share the branch system
\[
	S = \vcentergraphics{binarytree4-2}
\]
in the second through fifth leaves, which we may remove to obtain the $5$-leaf trees
\[
	\vcentergraphics{binarytree5-1} \qquad \vcentergraphics{binarytree5-7}.
\]
Given a common parse word $w_1 w_2 w_3 w_4 w_5$ of this pair of $5$-leaf trees, we can find a common parse word of the original pair of $8$-leaf trees by taking any valid labeling of $S$ and permuting the alphabet so that the root receives the label $w_2$.

In fact to decompose a pair of trees we only require a vertex in $T_1$ with dangling subtree $S_1$ and a vertex in $T_2$ with dangling subtree $S_2$ such that the leaves in $S_1$ and $S_2$ are the same.  For example, there are two such vertex pairs in the tree pair
\[
	\vcentergraphics{binarytree12-16486labeled} \qquad \vcentergraphics{binarytree12-43123labeled}
\]
(where corresponding leaves have been given the same label).  Breaking the trees at levels $2$ and $8$ as
\[
	\vcentergraphics{binarytree12-16486brokenlabeled} \qquad \vcentergraphics{binarytree12-43123brokenlabeled}
\]
produces the same partition $\{\{a, l\}, \{b, c, h, i, j, k\}, \{d, e, f, g\}\}$ of the leaves in both trees.  Thus, to find a parse word for the original pair it suffices to find parse words for the subtree pairs.  Proposition~\ref{root label} guarantees that we can reattach the subtrees consistently, since every binary tree that parses $w$ receives the same label for its root when the leaves are labeled with the letters of $w$.  Let us call a pair of path trees \emph{indecomposable} if there is no such (nontrivial) decomposition.

The tree pair in Theorem~\ref{comb-comb} consisting of $\LeftCombTree(n)$ and $\RightCombTree(n)$ is indecomposable, as is the pair in Theorem~\ref{comb-crooked} consisting of $\LeftCombTree(n)$ and $\RightCrookedTree(n)$.  On the other hand, breaking the trees $\LeftCombTree(n)$ and $\LeftCrookedTree(n)$ at level $1$ shows that this pair is decomposable, and in this case the decomposition accounts for the non-uniqueness of the equivalence classes of words in Theorem~\ref{comb-crooked2}.

The technique of decomposing trees is not limited to path trees.  For example, the pair
\[
	T_1 = \vcentergraphics{binarytree8-69} \qquad T_2 = \vcentergraphics{binarytree8-231}
\]
can also be decomposed into the two pairs
\[
	T_1' = \vcentergraphics{binarytree6-11} \qquad T_2' = \vcentergraphics{binarytree6-30}, \qquad S_1 = \vcentergraphics{binarytree3-1} \qquad S_2 = \vcentergraphics{binarytree3-2}.
\]

\subsection{Pairs that are not mutually crooked}

We showed in Proposition~\ref{bottom-comb} that if leaves $i$ and $i+1$ are siblings in $T_1$ and are on consecutive levels in $T_2$ then this pair of trees is reducible.  It is natural then to ask whether a tree pair in which leaves $i$ and $i+1$ are on consecutive levels in both trees is reducible.

First let us consider the pair
\[
	\vcentergraphics{binarytree7-1} \qquad \vcentergraphics{binarytree7-92}
\]
which has the common parse word $0001220$.  The three consecutive $0$s label leaves in both trees that are arranged in a right comb structure, and shortening each comb by two leaves produces the pair
\[
	\vcentergraphics{binarytree5-1} \qquad \vcentergraphics{binarytree5-11}
\]
which parses $01220$.  In other words, we obtain a parse word for the larger pair by simply inserting two $0$s corresponding to the two added leaves.  Let us see why this works.  In the first tree, the $4$-leaf right comb subtree whose first three leaves are labeled $0$ has internal vertex labels $1$, $2$, and $1$; in the second tree, the corresponding $4$-leaf right comb subtree has internal vertex labels $2$, $1$, and $2$.  In both cases, the internal vertex labels alternate between $1$ and $2$, so shortening the comb by two leaves preserves the label of the root and the labels of the bottom leaves.  Hence we still have a valid labeling.

The situation will be the same even if the two combs have different orientations, and even if the trees are not path trees but binary trees in general.  We formalize this as follows.  The relevant extending operation is \emph{triplicating} leaf $i$ --- inserting two $\smalltree{2}{1}$ structures in $T'$ so as to obtain three left leaves in $T$ if leaf $i$ is a left leaf in $T'$ and three right leaves in $T$ if leaf $i$ is a right leaf in $T'$, analogous to duplicating a leaf as discussed in Section~\ref{general families}.

\begin{theorem}
Let $T_1'$ and $T_2'$ be $(n-2)$-leaf binary trees.  Let $1 \leq i \leq n-2$, and let $T_1$ and $T_2$ be the $n$-leaf trees obtained from $T_1'$ and $T_2'$ by triplicating leaf $i$.  If $w = w_1 \cdots w_{n-2}$ is a parse word for $T_1'$ and $T_2'$, then $w_1 \cdots w_{i-1} w_i w_i w_i w_{i+1} \cdots w_{n-2}$ is a parse word for $T_1$ and $T_2$.
\end{theorem}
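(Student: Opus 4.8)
The plan is to mimic the argument already used in the discussion immediately preceding the statement, where a right comb of three leaves was shown to have internal vertex labels alternating between the two ``non-root'' letters, so that lengthening or shortening it by two leaves changes nothing essential. First I would fix a valid labeling of $T_1'$ and $T_2'$ realizing the parse word $w = w_1 \cdots w_{n-2}$; without loss of generality (permuting the alphabet) I may assume $w_i = 0$. Triplicating leaf $i$ in a binary tree $T_j'$ replaces leaf $i$ by a $4$-leaf comb (left comb if leaf $i$ was a left leaf, right comb if it was a right leaf), whose three bottom-most leaves are the new leaves $i, i+1, i+2$ and whose root sits where the old leaf $i$ was attached, with the rest of $T_j'$ untouched.

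The key step is the local computation inside that inserted $4$-leaf comb. I would show that there is a labeling of its internal vertices, compatible with $G$, in which the three new leaves all receive $0$ and the root of the comb receives the same label that the old leaf $i$ carried. Since leaf $i$ carried $0$ and, by Proposition~\ref{root label}, a binary tree parsing a word has its root determined by the parities of the letter-counts, the root of a $4$-leaf comb whose leaves are $0,0,0,?$ with the comb's root forced to be $0$ must have its fourth leaf also equal to $0$ — so in fact all four leaves are $0$ and the internal labels are forced to alternate, say $r, s, r$ for the non-$0$ pair $\{r,s\} = \{1,2\}$, with the comb's root being $0$ as desired (one checks the single $4$-leaf comb explicitly, in either orientation). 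Because the comb's root label matches the label leaf $i$ had in the original labeling of $T_j'$, grafting this locally-valid labeling into the untouched remainder of $T_j'$ yields a globally valid labeling of $T_j$. Doing this for $j = 1$ and $j = 2$ simultaneously produces the word $w_1 \cdots w_{i-1}\, w_i w_i w_i\, w_{i+1} \cdots w_{n-2}$ (with $w_i = 0$ here, but the conclusion is permutation-invariant, so it holds for the original $w_i$ as well) as a common parse word for $T_1$ and $T_2$.

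I expect no genuine obstacle here: the only thing to be careful about is the orientation bookkeeping — that triplicating a left leaf versus a right leaf inserts a left comb versus a right comb, and that in either case the bottom two leaves of the inserted comb are leaves $i+1, i+2$ in the correct left-to-right order — and the base verification of the single $4$-leaf comb, which is a finite check. One can also phrase the whole thing as two applications of Proposition~\ref{bottom-comb}: triplicating leaf $i$ is the composite of duplicating leaf $i$ and then duplicating leaf $i$ (or $i+1$) again, so the statement follows by applying Proposition~\ref{bottom-comb} once with $T_1$ side extended by $\smalltree{2}{1}$ and once more; however, the direct local-labeling argument is shorter and more transparent, so that is the route I would take in the written proof.
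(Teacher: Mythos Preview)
Your overall plan --- the local alternation argument from the paper's preceding discussion --- is the right one and is exactly how the paper justifies the theorem.  But you have misread what triplication does, and that error propagates into a false claim.  Triplicating leaf $i$ does \emph{not} hang a comb at the position of leaf $i$; rather, two new internal vertices are inserted between the parent $P$ of leaf $i$ and the sibling subtree $S$.  If leaf $i$ is a left child, then after triplication $P$'s right child is a new vertex $N_1$, whose right child is a new vertex $N_2$, whose right child is $S$, with leaves $i,i+1,i+2$ the left children of $P,N_1,N_2$ respectively (so the comb orientation is also the reverse of what you wrote).  The label that must be preserved for the grafting to succeed is therefore the label of $P$, which is some $p\in\{1,2\}$, not the label $0$ of the old leaf $i$.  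With the correct picture the verification is the one-line computation $N_2\mapsto p$, $N_1\mapsto s$, $P\mapsto p$ (where $s$ is the root label of $S$ and $\{p,s\}=\{1,2\}$), exactly the alternation in the paper's worked example.

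By contrast, your written argument asserts that a $4$-leaf comb parses $0000$ with root labeled $0$ and internal labels $r,s,r$.  This is impossible on two counts: the two sibling bottom leaves of any binary tree cannot share a label, so $0000$ is parsed by no tree at all; and a $4$-leaf tree has exactly three internal vertices, one of which \emph{is} the root, so ``internal labels $r,s,r$ with root $0$'' is self-contradictory.  Finally, the proposed shortcut via Proposition~\ref{bottom-comb} does not work as stated: that proposition handles the situation where one tree attaches $\smalltree{2}{1}$ at a leaf while the other duplicates, whereas here \emph{both} trees duplicate.
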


A pair of $n$-leaf trees $T_1$ and $T_2$ is \emph{weakly mutually crooked} if it cannot be obtained by triplicating some leaf $i$ in a pair of $(n-2)$-leaf trees.  To prove that every pair of binary trees has a parse word, by the previous theorem it suffices to consider pairs of weakly mutually crooked trees.

However, it appears that something stronger is true.
A pair of $n$-leaf trees $T_1$ and $T_2$ is \emph{mutually crooked} if it cannot be obtained by duplicating some leaf $i$ in a pair of $(n-1)$-leaf trees.
That is, no pair of consecutive leaves has an uncle--nephew relationship in both trees.
For example, the pair parsing $0110212$ in Section~\ref{introduction} is mutually crooked.
Experimental evidence suggests that in fact it suffices to consider pairs of mutually crooked trees.

\begin{conjecture}
Let $T_1'$ and $T_2'$ be $(n-1)$-leaf binary trees.  Let $1 \leq i \leq n-1$, and let $T_1$ and $T_2$ be the $n$-leaf trees obtained from $T_1'$ and $T_2'$ by duplicating leaf $i$.  There exists a parse word $w = w_1 \cdots w_n$ of $T_1$ and $T_2$ such that $w_i = w_{i+1}$.
\end{conjecture}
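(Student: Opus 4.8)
The plan is to attack the conjecture by strong induction on $n$, with the inductive content concentrated at the duplication site. The first step is a purely local computation. In $T_j$ (for $j=1,2$) let $p_j$ be the common parent of leaf $i$ and of the parent $q_j$ of leaf $i+1$, and let $S_j$ be the subtree hanging from the sibling of leaf $i$ in $T_j'$; thus in $T_j$ the leaves of $S_j$ hang below $q_j$. I claim that a parse word $w$ of $T_j$ satisfies $w_i=w_{i+1}$ if and only if, in the unique valid labeling, $p_j$ receives the same label that the root of $S_j$ receives --- a letter that by Proposition~\ref{root label} depends only on the restriction of $w$ to the leaves of $S_j$. Indeed, if $w_i=w_{i+1}=a$ and the root of $S_j$ is labeled $s$, then $a\neq s$, so $q_j$ (whose children are labeled $a$ and $s$) is labeled the third letter $b$, and $p_j$ (whose children are then labeled $a$ and $b$) is labeled the remaining letter, which is $s$; conversely, if $p_j=s$ then necessarily $a\neq s$ and $q_j$ is the letter distinct from $a$ and $s$, but $q_j$ is also the letter distinct from $w_{i+1}$ and $s$, forcing $w_{i+1}=a=w_i$. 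So it suffices to produce a common parse word of $T_1$ and $T_2$ whose valid labelings assign to $p_1$ the root label of $S_1$ and to $p_2$ the root label of $S_2$.

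Given this reformulation, the plan is to push all of the difficulty onto the site. By Theorem~\ref{main} the pair $T_1,T_2$ certainly has a common parse word, so we only have to arrange the equality $w_i=w_{i+1}$, that is, the labels at $p_1$ and $p_2$ described above. Split into cases according to whether leaf $i$ is a left child or a right child of its parent in $T_1'$ and in $T_2'$. If the two orientations agree, then $S_1$ and $S_2$ lie on the same side of the pair $\{i,i+1\}$ and their blocks of leaf positions are nested, one a prefix (or suffix) of the other; here one tries to lower $n$ by locating a structure common to $T_1$ and $T_2$ away from the site --- a shared pair of bottom leaves or a shared branch system --- and applying Proposition~\ref{bottom-bottom}, Proposition~\ref{bottom-comb}, or the decomposition of Section~\ref{reducing}, after which the inductive hypothesis applies (note that removing such a structure leaves $T_1$ and $T_2$ dup-extensions of smaller trees at the same leaf $i$). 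The cleanest sub-case is when $S_1$ or $S_2$ is a single leaf, so that one of the trees is obtained from its $(n-1)$-leaf predecessor by attaching $\smalltree{2}{1}$ at the neighbour of leaf $i$. If the orientations disagree, then $S_1$ and $S_2$ occupy disjoint blocks on opposite sides of the pair, and one should argue directly: the root labels of $S_1$ and $S_2$ are governed only by the parities of the three letter-counts of $w$ on those two blocks (Proposition~\ref{root label}), which ought to leave enough freedom to fix $w_i=w_{i+1}$ while keeping the labelings of $T_1$ and $T_2$ above $p_1$ and $p_2$ valid.

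I expect the genuine obstacle to be the case where $S_1$ and $S_2$ have different sizes and none of the above reductions is available near the site: one must then choose the letters on the two (nested or disjoint) leaf-blocks below $q_1$ and below $q_2$ simultaneously, reconcile the two resulting root labels with the labelings of the two ``upper trees'' obtained from $T_1'$ and $T_2'$ by deleting leaf $i$ together with its sibling subtree, and do all of this while $w$ remains a common parse word --- which is a constrained instance of Theorem~\ref{main} and hence, a priori, no easier than the four color theorem itself. A plausible way past this is either (a) to combine the inductive reductions with finer structural information about $\ParseWords(T_1',T_2')$, in the spirit of the Csar--Sengupta--Suksompong analysis of tree pairs whose join exists, or (b) to pass to the cross-product/quaternion model of Section~\ref{equivalence} and prove by a counting argument that the common parse words with $w_i\neq w_{i+1}$ --- which, by the local computation above, are exactly those for which $p_j$ avoids the root label of $S_j$ for at least one $j$ --- cannot account for all of $\ParseWords(T_1,T_2)$.
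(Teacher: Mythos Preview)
The statement you are trying to prove is presented in the paper as a \emph{conjecture}, not a theorem; the authors give no proof, only experimental evidence. So there is no ``paper's own proof'' to compare against, and the relevant question is simply whether your argument succeeds.

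It does not. Your local computation is correct and is a clean reformulation: a common parse word $w$ of $T_1,T_2$ has $w_i=w_{i+1}$ precisely when, in each tree $T_j$, the vertex $p_j$ inherits the root label of $S_j$. But everything after that is a plan rather than a proof. You explicitly invoke Theorem~\ref{main} to guarantee that \emph{some} common parse word exists and then propose to ``arrange'' the extra equality; yet you give no mechanism for doing so. The case split by orientation and the suggestion to peel off shared bottom leaves or shared branch systems via Propositions~\ref{bottom-bottom} and~\ref{bottom-comb} are reasonable moves, but you yourself note that when no such reduction is available near the site, one is left with a constrained version of Theorem~\ref{main} that is ``a priori, no easier than the four color theorem itself.'' That is exactly the gap: the hard case is not handled, and the two escape routes you sketch in (a) and (b) are speculative.

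There is also a methodological issue. The purpose of this conjecture in the paper is to serve as a reduction step \emph{toward} a direct proof of Theorem~\ref{main}. A proof of the conjecture that already assumes Theorem~\ref{main} (equivalently, the four color theorem) would be of limited value for that program, even if it were complete. If you want the conjecture to do real work, the argument must avoid appealing to Theorem~\ref{main}; and if instead you are content to assume Theorem~\ref{main}, then you still owe an actual argument that among the nonempty set $\ParseWords(T_1,T_2)$ there is a word with $w_i=w_{i+1}$, which your proposal does not supply.
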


For example, in the pair discussed at the beginning of this subsection, leaves $5$ and $6$ are on consecutive levels in both trees, and these leaves receive the label $2$.  Note however that the parse word of $T_1$ and $T_2$ is not necessarily a simple extension of a parse word of $T_1'$ and $T_2'$.

The trees $\LeftCrookedTree(n)$ and $\RightCrookedTree(n)$ (which we addressed in Theorem~\ref{crooked-crooked count}) are mutually crooked, but for $n \geq 5$ no path tree is mutually crooked to $\LeftCombTree(n)$, since even a completely crooked tree has a pair of consecutive leaves that lie in consecutive levels.  Theorems~\ref{comb-comb} and \ref{turn-turn} provide additional examples of pairs that fail to be mutually crooked.

\subsection{Other conjectures}

To prove that every pair of $n$-leaf binary trees $T_1$ and $T_2$ has a parse word, it therefore suffices to consider indecomposable, weakly mutually crooked pairs of trees.  In particular, we may assume that the leaves on level $1$ in $T_1$ and $T_2$ are different, since if they are the same then the pair is decomposable at level $1$ into smaller pairs.

\begin{theorem}
Let $n \geq 3$, and let $T_1$ and $T_2$ be $n$-leaf path trees such that leaf $1$ is on level $1$ in $T_1$ and leaf $n$ is on level $1$ in $T_2$.
Then $T_1$ and $T_2$ have no parse word of the form $01v1$.
\end{theorem}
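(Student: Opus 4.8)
The plan is to argue by contradiction: assume $w = 0 1 v 1$ (of length $n$) is a common parse word of $T_1$ and $T_2$, and trace the labels that are forced at and just below the two roots until a vertex and one of its children are forced to receive the same label, which $G$ forbids (each formation rule $a \to bc$ has $\{a,b,c\} = \{0,1,2\}$, so a vertex and either child always get distinct labels).

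The first step is to identify the common root label. By Proposition~\ref{root label} the roots of $T_1$ and $T_2$ receive the same label $\rho$ in any labelings realizing $w$. In $T_1$, leaf $1$ lies on level $1$, hence is a child of the root --- in fact the left child, since it is the leftmost leaf, so the left subtree of the root cannot be nonempty to its left --- and it is labeled $w_1 = 0$, so $\rho \neq 0$. Symmetrically, in $T_2$ leaf $n$ is the right child of the root and is labeled $w_n = 1$, so $\rho \neq 1$. Therefore $\rho = 2$. I would flag that this is the one place both hypotheses on the trees enter: using only $T_1$ one gets $\rho \in \{1, 2\}$, and the collision found below genuinely requires $\rho = 2$.

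The second step works entirely inside $T_1$. Its root is labeled $2$ with left child leaf $1$ labeled $0$, so the right child $p$ of the root is forced to receive the label $1$. The subtree rooted at $p$ consists of leaves $2, \dots, n$, which is at least two leaves, so $p$ is internal; moreover that subtree is again a path tree (its levels are a subset of the levels of $T_1$ below the root), so at least one child of $p$ is a leaf, and that leaf child is an extreme leaf of the subtree, namely leaf $2$ or leaf $n$ (when $n = 3$ both children of $p$ are leaves, and they are precisely leaves $2$ and $n$, so the conclusion still holds). But $w_2 = w_n = 1$, so whichever of leaf $2$, leaf $n$ is a child of $p$ carries the label $1$ --- the same as its parent $p$ --- contradicting the formation rules of $G$. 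Hence no common parse word of the form $01v1$ exists. The main obstacle is really just this bookkeeping: verifying that the leaf child of $p$ can only be leaf $2$ or leaf $n$, and checking the degenerate case $n = 3$, since once $\rho = 2$ is in hand the contradiction is immediate.
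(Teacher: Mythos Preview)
Your proof is correct and follows essentially the same argument as the paper: determine that the common root label is $2$, deduce that the level-$1$ internal vertex $p$ of $T_1$ is labeled $1$, and then observe that its leaf child is leaf $2$ or leaf $n$, both labeled $1$, contradicting the grammar. The only cosmetic difference is that you absorb the case $n=3$ into the general argument, whereas the paper checks it separately.
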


\begin{proof}
For $n = 3$ one checks that $011$ is not a parse word for one of the two $3$-leaf binary trees.
Assume $n \geq 4$.
Toward a contradiction, suppose that $01v1$ is a common parse word for some $v$.
Then the root of each tree receives the label $2$, and the internal vertex on level $1$ of $T_1$ receives $1$.
Consider the children of this internal vertex.
If the right child is a leaf, then it is leaf $n$ and so receives $1$, which is not a valid label because its parent is already labeled $1$.
If the left child is a leaf, then it is leaf $2$ and so receives $1$, which is also not a valid label.
\end{proof}

A similar argument shows that if there is a parse word of the form $01v2$, then leaf $2$ of $T_1$ is on level $2$, and leaf $n-1$ of $T_2$ is on level $2$.

The following conjecture gives several statements that seem to be true and may be helpful in proving Theorem~\ref{main} for path trees directly.

\begin{conjecture}
Let $n \geq 4$, and let $T_1$ and $T_2$ be $n$-leaf path trees such that leaf $1$ is on level $1$ in $T_1$ and leaf $n$ is on level $1$ in $T_2$.  Then we have the following.
\begin{itemize}
\item If $T_1$ and $T_2$ have no parse word of the form $00v$ or $v00$, then they have a unique parse word (up to permutation of alphabet).
\item If $T_1$ and $T_2$ have no parse word of the form $00v$ and are mutually crooked, then they have a parse word of the form $01v00$.
\item If $T_1$ and $T_2$ have no parse word of the form $00v$, then the only possibilities for the $2$-tuple
\[
	(\text{level of leaf $2$ in $T_1$}, \text{level of leaf $n-1$ in $T_2$})
\]
are $(2,3)$ and $(k,2)$ for some $k \geq 2$. \\
Moreover, if $T_1$ and $T_2$ are weakly mutually crooked, the only possibilities are $(2,3)$ and $(k,2)$ for some $2 \leq k \leq 4$. \\
Moreover, if $T_1$ and $T_2$ are mutually crooked, the only possibilities are $(2,3)$ and $(k,2)$ for some $2 \leq k \leq 3$.
\end{itemize}
\end{conjecture}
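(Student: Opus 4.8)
The plan is to translate everything into the $\{l,r\}$-word encoding of path trees and to treat ``no parse word of the form $00v$'' as the fundamental hypothesis. Leaf $1$ on level $1$ in $T_1$ is equivalent to the word of $T_1$ beginning with $r$, and leaf $n$ on level $1$ in $T_2$ to the word of $T_2$ beginning with $l$. The left--right reflected pair $(T_2^{\mathrm{refl}}, T_1^{\mathrm{refl}})$ again begins with $r$ and $l$ respectively, and its parse words are the left--right reversals of those of $(T_1,T_2)$; hence ``no parse word $v00$'' for $(T_1,T_2)$ is the same condition as ``no parse word $00v$'' for $(T_2^{\mathrm{refl}}, T_1^{\mathrm{refl}})$, and for the first bullet both conditions hold simultaneously for the pair and its reflection. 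Assuming no $00v$ parse word and invoking the theorem preceding the conjecture (no parse word of the form $01v1$), every parse word of the pair is, up to permutation of the alphabet, of the form $01v$ with last letter in $\{0,2\}$; and by the remark following that theorem, a $01v2$ parse word forces leaf $2$ of $T_1$ and leaf $n-1$ of $T_2$ both onto level $2$, already the tuple $(2,2)$ of the third bullet. So after this reduction the work is concentrated on the case where every parse word has the form $01v0$.

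The next step is a ``no shared bottom leaf'' lemma: under the hypotheses of the first bullet no leaf is a bottom leaf in both $T_1$ and $T_2$, since Proposition~\ref{shared bottom leaf} would otherwise furnish a parse word $0^{k-1}10^{n-k}$, which for $n\ge 4$ has the form $00v$ if $k\ge 3$ and the form $v00$ if $k=2$. Under the weaker hypothesis of the second and third bullets the same argument leaves only the explicit possibility of a shared bottom pair yielding the single word $010^{n-2}$, which I would dispose of directly by describing the two forced shapes it imposes. With the shared-bottom-leaf situation understood, I would run an induction on $n$: whenever the (unique) bottom pair of the path tree $T_1$ lands on a pair of siblings or on a pair of consecutive levels in $T_2$, Proposition~\ref{bottom-bottom} or Proposition~\ref{bottom-comb} reduces $(T_1,T_2)$ to a smaller pair, multiplying or preserving $|\ParseWords(T_1,T_2)|$; one then checks that the reduction stays inside a suitably enlarged hypothesis class (carrying the no-$00v$ condition, the crookedness bookkeeping, and the positions of leaf $2$ and leaf $n-1$), and iterates down to a base case that is a comb--comb or turn--turn pair already enumerated in Section~\ref{parameterized families} or a small pair checked by hand. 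For the first bullet this chain of equalities forces $|\ParseWords(T_1,T_2)|=1$; for the third it pins the level tuple to the value surviving at the point where the reduction stalls.

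For the second bullet one argues in the opposite direction, producing a second, specially shaped parse word. Mutual crookedness says no consecutive pair is in an uncle--nephew relationship in both trees, so somewhere there is a consecutive pair that is uncle--nephew in one tree and a pair of siblings in the other; applying Proposition~\ref{bottom-comb} in the ``creating'' direction exhibits a parse word in which those two leaves carry equal labels, and permuting the alphabet together with the $01v1$ theorem pins that common label and the two ends of the word to the form $01v00$. Here one cannot simply quote the bijection of Proposition~\ref{bottom-comb}, since (as the remark after the triplication theorem points out) this parse word need not be an extension of a parse word of the smaller pair, so the labeling must be re-derived in place.

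The main obstacle I anticipate is the non-existence direction --- proving in the first bullet that there is no parse word besides the greedily forced one, and in the third that the level tuple takes none of the excluded values --- for pairs that are irreducible under all of Propositions~\ref{bottom-bottom}, \ref{bottom-comb}, \ref{shared bottom leaf} and the triplication theorem. For such pairs one must show, by a global analysis of how the two path-tree shapes interleave, that the left-to-right (and right-to-left) construction is forced at every leaf, much as in the proofs of Theorems~\ref{comb-comb} and \ref{turn-turn} but without an explicit parameterization to lean on. The mutually-crooked refinements of the third bullet (the bounds $k\le 4$ and $k\le 3$) look the most delicate part, since they couple the crookedness constraint on the interleaving with the $00v$-freeness, and I would expect to dispatch them by a short dedicated case analysis rather than a uniform induction.
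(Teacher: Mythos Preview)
The statement you are attempting to prove is labeled a \emph{Conjecture} in the paper; the authors do not provide a proof and explicitly introduce it as ``several statements that seem to be true and may be helpful in proving Theorem~\ref{main} for path trees directly.'' There is therefore no paper proof to compare your proposal against.

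As for the proposal itself, it is a reasonable outline of reductions but not a proof, and the gaps are substantive. The central one is the one you yourself flag: the non-existence direction for pairs that are irreducible under Propositions~\ref{bottom-bottom}, \ref{bottom-comb}, \ref{shared bottom leaf} and triplication. Your induction on $n$ via Propositions~\ref{bottom-bottom} and~\ref{bottom-comb} does not obviously preserve the hypothesis ``no parse word of the form $00v$'': removing a bottom $\smalltree{2}{1}$ or undoing a duplication shifts the leaf indexing and can create a $00$ prefix among the smaller pair's parse words even when none existed before, so the reduced pair may fall outside the hypothesis class and the inductive step breaks. For the second bullet, your plan to apply Proposition~\ref{bottom-comb} ``in the creating direction'' presupposes a parse word of the smaller pair to extend, but you then acknowledge that the desired $01v00$ word ``need not be an extension of a parse word of the smaller pair''; that is a genuine circularity, not a technicality, and nothing in the sketch produces the required labeling from scratch. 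Finally, the refinements of the third bullet under (weak) mutual crookedness are deferred to an unspecified case analysis. Until these steps are filled in, the argument does not establish the conjecture; it remains open, as the paper states.
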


Finally, we give an interesting conjecture that has been explicitly verified for $n \leq 12$.  (The statement does not hold for general binary trees.)

\begin{conjecture}
Let $n \geq 4$.  Every pair of $n$-leaf path trees parses a word of the form $u00v$ for some (possibly empty) $u, v$.
\end{conjecture}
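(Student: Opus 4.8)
The plan is to argue by strong induction on $n$, checking the small cases $n = 4$ and $n = 5$ directly with the accompanying software.  Since a tree parses every permutation of any word it parses, parsing a word of the form $u00v$ is equivalent to parsing a word in which two consecutive letters coincide, so for each pair of $n$-leaf path trees $T_1, T_2$ the goal is to produce a common parse word that is not alternating.  Write $j(T)$ for the position of the left bottom leaf of a path tree $T$, so that its bottom leaves occupy positions $j(T)$ and $j(T)+1$; renaming the trees if necessary, assume $j(T_1) \le j(T_2)$.

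First dispose of the case $j(T_2) \le j(T_1) + 1$: then some position is a bottom leaf in both trees, and Proposition~\ref{shared bottom leaf} produces the common parse word $0^{k-1} 1 0^{n-k}$, one of whose runs of $0$s has length at least $2$ because $n \ge 4$.  (This already settles the completely crooked pair of Theorem~\ref{crooked-crooked count}, as $\LeftCrookedTree(n)$ and $\RightCrookedTree(n)$ have bottom leaves in the same or adjacent positions.)  So assume $j(T_2) \ge j(T_1) + 2$.  Reading off a path tree from the left --- the left-hanging leaves in increasing order of level, then the two bottom leaves, then the right-hanging leaves in decreasing order of level --- shows that, because the two bottom positions differ by at least $2$, the bottom leaves $i := j(T_1)$ and $i+1$ of $T_1$ sit in $T_2$ as two consecutive leaves on the same side, hence are either an uncle--nephew pair (consecutive levels) or separated by at least two levels; the analogous statement holds for the bottom pair of $T_2$ inside $T_1$.

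Next, dispatch everything to which a reduction applies.  If the bottom pair of $T_1$ lies on consecutive levels in $T_2$ --- or that of $T_2$ on consecutive levels in $T_1$ --- then Proposition~\ref{bottom-comb} reduces the pair to one of $(n-1)$-leaf path trees; the inductive hypothesis supplies a non-alternating common parse word $w'$ of the smaller pair, and lifting $w'$ through Proposition~\ref{bottom-comb} reinserts $12$ or $21$ at the reduced position, which keeps a repeated pair of letters unless that pair sat exactly at the reduced position.  In that residual situation one uses that the reinsertion occurs next to a leaf labeled $0$ (the normalization in the proof of Proposition~\ref{bottom-comb}) and typically creates a fresh $11$ or $22$ there.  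If instead three consecutive leaves lie on three consecutive levels in both trees, the pair comes from triplicating a leaf and the theorem on triplicating leaves stated above yields a common parse word containing $w_i w_i w_i$ outright.  And if the pair is decomposable, one recurses on a contiguous subpair with at least four leaves and splices the parse words using Proposition~\ref{root label}, keeping the subpair's repeated pair away from the splice.  What remains after all of this are \emph{indecomposable, weakly mutually crooked} pairs in which, moreover, neither tree's bottom pair falls on consecutive levels in the other --- the genuinely crooked regime in which none of the $\smalltree{2}{1}$-reductions is available.

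This last regime is where I expect the real work to be, and my proposal for it is a direct construction.  In any parse word the two bottom leaves of a path tree $T$ receive the two labels of the determined set $\{0,1,2\} \setminus \{\ell\}$, where $\ell$ labels the deepest internal vertex of $T$ --- but in either order --- while the hanging leaf adjacent to the bottom pair receives a label that is usually not $\ell$.  Exploiting this binary freedom in $T_1$ and in $T_2$, together with the description in Section~\ref{turn trees} of a parse word of a path tree as an alternating word of internal-vertex labels, one would choose compatible internal labelings of the two trees so as to force a hanging leaf to take the same label as an adjacent bottom leaf.  Carrying this out uniformly --- equivalently, showing that every such crooked pair has a common parse word that is in addition not alternating --- is the crux; it subsumes the statement that every pair of path trees has a common parse word at all (the path-tree case of Theorem~\ref{main}), which the present methods do not yet establish directly, and I anticipate that a complete argument will require strengthening the inductive hypothesis to control the location of the guaranteed repeat --- for instance to place it near one of the two bottom pairs, as happens in the comb--comb and turn--turn families --- or else a genuinely new structural input about completely crooked pairs.
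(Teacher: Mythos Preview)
The statement you are attempting to prove is listed in the paper as a \emph{conjecture}, not a theorem; the paper offers no proof, only the remark that it has been verified computationally for $n \le 12$. There is no paper proof to compare against, and a complete argument here would be a new result.

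Your proposal is not a complete proof, and you say so yourself in the final paragraph. Two concrete gaps stand out.

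First, the lifting step through Proposition~\ref{bottom-comb} is not airtight. When the only repeated pair in the inductively supplied word $w'$ involves position $i$ --- say $w'_{i-1} = w'_i = 0$ --- replacing $w'_i$ by $21$ or $12$ yields $\cdots 0\,2\,1\,w'_{i+1}\cdots$ or $\cdots 0\,1\,2\,w'_{i+1}\cdots$, and neither contains a repeat at those positions unless $w'_{i+1}$ happens to match. Since the choice between $21$ and $12$ is forced by the orientation of leaf $i$ in $T_2'$ and not by you, the phrase ``typically creates a fresh $11$ or $22$'' is doing work that an actual argument must do. A strengthened inductive hypothesis --- a repeat located away from position $i$, or at least two repeats in $w'$ --- would be needed here, and you have not supplied one.

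Second, and decisively, the ``genuinely crooked regime'' of your last paragraph is where the whole difficulty lies, and what you offer there is a description of desiderata rather than a construction. As you yourself note, even producing \emph{some} common parse word for an arbitrary indecomposable, weakly mutually crooked pair of path trees is already the path-tree instance of Theorem~\ref{main}, which the paper does not establish by any direct method; asking in addition that the word be non-alternating is strictly harder. ``Exploiting the binary freedom'' at the two bottom pairs and ``choosing compatible internal labelings'' names what one would like to do, not how to do it. Until this regime is handled the induction does not close, so the proposal does not prove the conjecture.
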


\end{document}